\documentclass[11pt,a4paper]{article}
\usepackage{amscd}
\usepackage[T1]{fontenc}
\usepackage[english]{babel}
\usepackage[ansinew]{inputenc}
\usepackage[psamsfonts]{amssymb}
\usepackage{amsmath}
\usepackage[all]{xy}
\usepackage{graphicx}
\usepackage{graphics}
\usepackage{float}
\usepackage{xspace}
\usepackage{tikz}
\usepackage{subfigure}

\newtheorem{definition}{Definition}[section]
\newtheorem{theorem}{Theorem}[section]

\newtheorem{corollary}{Corollary}[section]
\newtheorem{lemma}{Lemma}[section]

\newenvironment{proof}[1][Proof]{\noindent\textbf{#1.} }{\ \rule{0.5em}{0.5em}}
\setlength{\textwidth}{16cm}
\setlength{\textheight}{24cm}
\setlength{\oddsidemargin}{0,25cm}
\setlength{\evensidemargin}{0,25cm}
\setlength{\topmargin}{-0,3cm}
\begin{document}
\title{\textbf{On the diameter and incidence energy of iterated total graphs}}

\author{Eber Lenes$^{a}$, Exequiel Mallea-Zepeda$^{b}$\thanks{%
Corresponding author. E-mail addresses: elenes@unisinucartagena.edu.co (Eber Lenes),
emallea@uta.cl (Exequiel Mallea), mrobbiano@ucn.cl (Mar\'ia Robbiano), jrodriguez01@ucn.cl (Jonnathan Rodr\'iguez).}, Mar\'ia Robbiano$^{c}$, Jonnathan Rodr\'iguez$^{c}$ 
\\
$^{a}${\small Departamento de Investigaciones, Universidad del Sin\'u. El\'ias Bechara Zain\'um}, {\small Cartagena, Colombia.}\\
$^{b}${\small Departamento de Matem\'{a}tica, Universidad  de Tarapac\'a}, {\small Arica, Chile.}\\
$^{c}${\small Departamento de Matem\'{a}tica, Universidad Cat\'olica del Norte}, {\small Antofagasta, Chile.}
}
\date{}
\maketitle

\begin{abstract}
\noindent
The total graph of $G$, $\mathcal T(G)$ is the graph whose set of vertices is the union of the {sets of} vertices and edges of $G$, where two vertices are adjacent if and only if they stand for either incident or adjacent elements in $G$. 
{For $k\geq2$, the $k\text{-}th$ iterated total graph of $G$, $\mathcal{T}^k(G)$, is defined recursively as} $\mathcal{T}^k(G)=\mathcal{T}(\mathcal{T}^{k-1}(G)).$  If $G$ is a connected
graph its diameter is the maximum distance between
any pair of vertices in $G$. The incidence energy $IE(G)$ of $G$ is the sum of the
singular values of the incidence matrix of $G$. In this paper for a given integer $k$ we establish a necessary and sufficient condition under which $diam(\mathcal{T}^{r+1}(G))>k-r,$ \ $r\geq0$.
In addition, bounds for the incidence energy of the iterated graph $\mathcal{T}^{r+1}(G)$ are obtained, provided $G$ to be a regular graph. Finally, {new
families of non-isomorphic cospectral graphs are exhibited.}
\end{abstract}

\textit{\bf{2010 AMS Classification}:}\  05C12, 05C50, 05C75, 05C76, 15A18

{\bf{Keyword}}: \noindent Total graph; Line graph; Diameter; Incidence energy; Regular graph.
\thispagestyle{empty} \baselineskip=0.30in
\newpage
\clearpage

\section{Introduction}
Let $G$ be a simple connected graph on $n$ vertices. Let $v_1,v_2,{\ldots,} v_n$ be the vertices of $G$. An edge $e$ with end vertices $u$ and $v$ will be denoted by $(uv)$. {Sometimes, after a labeling of the vertices of $G$, a vertex $v_i$ is simply referred by its label $i$ and an edge $v_{i}v_{j}$ is simply referred as $(ij)$. The incident vertices to the edge $(ij)$ are $i$ and $j$.} The distance between two vertices $v_i$ and $v_j$
in $G$ is equal to the length of the shortest path in $G$ joining $v_i$ and $v_j$, 
denoted by $d_G(v_i,v_j)$. The diameter of $G$,
denoted by $diam(G)$, is the maximum distance between any pair of vertices in $G$. 
The above distance provides the simplest and most natural metric in graph
theory, and its study has recently had increasing interest in discrete mathematics research. 
As usual, $K_n$, $P_n$, $C_n$, and $S_n$ denote, respectively, the complete graph, the path, the cycle, and the star of $n$ vertices.

The line graph of $G$, denoted by $\mathcal{L}(G)$, is the graph whose vertex set are the edges in $G$, where two vertices are adjacent
if the corresponding edges in $G$ have a common vertex. The $k\text{-}th$ iterated line graph of $G$
is defined recursively as
$\mathcal{L}^k(G)=\mathcal{L}(\mathcal{L}^{k-1}(G))$, $k\geq2$, where $\mathcal{L}(G)=\mathcal{L}^1(G)$
and $G=\mathcal{L}^0(G)$. 
Metric properties of the line graph have recently been extensively
studied 
in the mathematical literature \cite{DM32,DM33,DM35,DM34,DM31}, and it found remarkable applications in chemistry \cite{DM37,DM38,DM36,DM39}.

The total graph of $G$, denoted by $\mathcal{T}(G)$, is the graph whose vertex set corresponds {union of set of vertices and edges of $G$, where, two vertices are adjacent if their
corresponding elements are either adjacent or incident in $G$. }
The $k\text{-}th$ iterated total graph of $G$ is defined recursively as
$\mathcal{T}^k(G)=\mathcal{T}(\mathcal{T}^{k-1}(G))$, $k\geq2$, where $\mathcal{T}(G)=\mathcal{T}^1(G)$ and $G=\mathcal{T}^0(G)$.

The incidence matrix, $I(G),$ is the $n\times m$ matrix
whose $(i,j)$-entry is $1$ if $v_i$ is incident to $e_j$ and $0$ otherwise. It is known that

\begin{equation}
I(G)I^T(G)=Q(G)\label{13}
\end{equation}
and
\begin{equation}
I^T(G)I(G)=2I_{m}+A(\mathcal{L}(G))\label{0013}
\end{equation}
where $Q(G)$ is the signless Laplacian matrix of $G$ and $A(\mathcal{L}(G))$ is the adjacency matrix of the line graph of $G$.

In \cite{p6,p7}, the authors have introduced the concept of the incidence energy $IE(G)$ of $G$ as the sum of the
singular values of the incidence matrix of $G$. It is well known that the singular values of a matrix
$M$ are the nonzero square roots of $MM^T$ or $M^TM$ {as}  these matrices have the same nonzero
eigenvalues. From these facts and (\ref{13})  
it follows that

$$IE(G)=\sum_{i=1}^{n}\sqrt{q_i},$$
where $q_{1},q_{2},{\ldots,} q_{n}$ are the signless Laplacian eigenvalues of $G$. 
 
In \cite{p6}, Gutman, Kiani, Mirzakhah, and Zhou, proved that

\begin{theorem}
\cite{p6}
Let $G$ be a regular graph on $n$ vertices 
of degree $r$. Then,
\begin{equation*}
IE(\mathcal{L}^{k+1}(G))\leq\dfrac{n_k(r_k-2)}{2}\sqrt{2r_k-4}+\sqrt{4r_k-4}+\sqrt{(n_k-1)[(3r_k-4)(n_k-1)-r_k]},
\end{equation*}
where $n_k$ and $r_k$ are the order and degree of $\mathcal{L}^{k+1}(G)$, respectively. Equality holds if and only if $\mathcal{L}^{k}(G)\cong K_n$.
\end{theorem}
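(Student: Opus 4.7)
The plan is to write $IE(\mathcal{L}^{k+1}(G))$ explicitly in terms of the adjacency spectrum of $\mathcal{L}^{k}(G)$ and then, after extracting the contribution of the Perron eigenvalue, bound what remains by Cauchy--Schwarz.

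First, I would note that since $G$ is $r$-regular, a straightforward induction using the line-graph formulas $r_{k+1}=2r_k-2$ and $n_{k+1}=n_kr_k/2$ shows that every iterate $\mathcal{L}^{j}(G)$ is regular; in particular $\mathcal{L}^{k+1}(G)$ is $(2r_k-2)$-regular, so its signless Laplacian is $Q=(2r_k-2)I+A(\mathcal{L}^{k+1}(G))$ and each signless Laplacian eigenvalue is an adjacency eigenvalue shifted by $2r_k-2$. Combining \eqref{13} and \eqref{0013} yields the classical fact that for an $r$-regular graph $H$ on $n$ vertices with $m$ edges, the adjacency spectrum of $\mathcal{L}(H)$ consists of $\lambda_i+r-2$ for every adjacency eigenvalue $\lambda_i$ of $H$, together with $-2$ with multiplicity $m-n=n(r-2)/2$. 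Applied to $H=\mathcal{L}^{k}(G)$ and shifted by $2r_k-2$, this produces the signless Laplacian spectrum of $\mathcal{L}^{k+1}(G)$: the value $2r_k-4$ with multiplicity $n_k(r_k-2)/2$, together with $3r_k-4+\lambda_i$ as $\lambda_i$ ranges over the spectrum of $A(\mathcal{L}^{k}(G))$.

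Substituting into $IE(H)=\sum\sqrt{q_i}$ and peeling off the Perron eigenvalue $\lambda_1=r_k$, which contributes the term $\sqrt{4r_k-4}$, I would obtain
$$IE(\mathcal{L}^{k+1}(G))=\frac{n_k(r_k-2)}{2}\sqrt{2r_k-4}+\sqrt{4r_k-4}+\sum_{i=2}^{n_k}\sqrt{3r_k-4+\lambda_i}.$$
Applying Cauchy--Schwarz to the residual sum and using $\sum_{i=1}^{n_k}\lambda_i=0$, hence $\sum_{i\ge 2}\lambda_i=-r_k$, gives
$$\sum_{i=2}^{n_k}\sqrt{3r_k-4+\lambda_i}\;\le\;\sqrt{(n_k-1)\bigl[(3r_k-4)(n_k-1)-r_k\bigr]},$$
which is exactly the third summand of the claimed upper bound.

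Cauchy--Schwarz is tight precisely when $\lambda_2=\cdots=\lambda_{n_k}$, i.e.\ when $\mathcal{L}^{k}(G)$ is a connected regular graph with only two distinct adjacency eigenvalues, which classically forces $\mathcal{L}^{k}(G)\cong K_{n}$. The main obstacle is not conceptual but notational: one must keep straight the two successive shifts (the $r_k-2$ arising from the line-graph spectral formula and the $2r_k-2$ arising from the regularity of $\mathcal{L}^{k+1}(G)$), and ensure the regularity hypothesis is propagated through every iteration so that the identity $Q=rI+A$ is available at each step.
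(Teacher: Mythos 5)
This theorem is quoted in the paper from \cite{p6} without any accompanying proof, so there is no in-paper argument to compare against; your proposal is correct and is exactly the standard argument for this bound (shift the line-graph adjacency spectrum by the regularity degree $2r_k-2$ to obtain the signless Laplacian spectrum of $\mathcal{L}^{k+1}(G)$, peel off the Perron contribution $\sqrt{4r_k-4}$, and apply Cauchy--Schwarz using $\sum_{i\ge2}\lambda_i=-r_k$). One point worth making explicit: you read $n_k$ and $r_k$ as the order and degree of $\mathcal{L}^{k}(G)$, which is the only reading under which the displayed bound comes out as stated; the phrase ``order and degree of $\mathcal{L}^{k+1}(G)$'' in the theorem is a typo, as the paper's own recursions $r_k=2r_{k-1}-2$ and $n_k=\tfrac12 r_{k-1}n_{k-1}$ confirm, and your equality analysis (two distinct adjacency eigenvalues of a connected regular graph forcing completeness) tacitly uses the connectivity of $\mathcal{L}^k(G)$, which is worth stating.
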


This paper is organized as follows. In Section 2, we establish conditions on a graph $G$, in order to the diameter of $\mathcal{T}(G)$ {does not exceed $k$, \ $k\geq2$.} Also 
conditions {in order to the diameter of $\mathcal{T}(G)$ to be greater than or equal to $k$, \ $k\geq3$,}  are established. Moreover, { we establish a necessary {condition} so that the diameter of $\mathcal{T}^{r+1}(G)$ does not be greater than or equal to $k-r$, \ $k\geq2$, $r\geq 0$.} In Section 3, {we derive upper and lower bounds} on the incidence energy for {the} iterated total graphs of regular graphs.
Aditionally, we construct {some} new families of nonisomorphic cospectral graphs.

\section{Diameter of total graphs}

In this section we establish structural conditions for a graph $G$, so that the diameter of $\mathcal{T}(G)$ does not exceed $k$ and it be no less than $k$, {\ $k\geq3$.}

\subsection{Main results}

{Before proceeding we need the following definitions.}

\begin{definition}
A path $P$ with end vertices $u$ and $v$ {in} a connected graph $G$ is called a diameter path {of} $G$ if {$d_G(u,v)=diam(G)$ and $P$ is a path with $diam(G)+1$ vertices.}
\end{definition}

\begin{definition}
{A subgraph $H$ of a connected graph $G$ is called a diameter subgraph of $G$ if $H$ has a diameter path of $G$ as a subgraph.}
\end{definition}

\begin{definition}
{The Lollipop $Lol_{n,g}$ is the graph obtained from a cycle with $g$ vertices by identifying one of its vertices with a vertex of a path of length $n-g.$ Note that this graph has $n$ vertices and diameter  $n-g+1+\left\lfloor \frac { g }{ 2 }  \right\rfloor .$}
{In \cite{cardoso} was conjectured that $Lol_{3,n-3}$ is the nonbipartite graph on $n$ vertices with minimum smallest signless Laplacian eigenvalue $q_n.$}
\end{definition}

\begin{figure}[h]
\begin{eqnarray*}
\begin{tikzpicture};
    \tikzstyle{every node}=[draw,circle,fill=black,minimum size=3.5pt,inner sep=0.2pt]
    \draw
                  (-1,0) node (1) [label=below:] {}
                  (0,-1) node (2) [label=below:] {}
                  (1,0) node (3) [label=below:] {}
                  (0,1) node (4) [label=below:] {}
                  (2,0) node (5) [label=below:] {}
                  (3,0) node (6) [label=below:] {}
                  (4,0) node (7) [label=above:] {}
                  (5,0) node (8) [label=below:] {};
        \draw (1)--(2);\draw (2)--(3);\draw (3)--(4);\draw (4)--(1);\draw (3)--(5);\draw (5)--(6);\draw (6)--(7);\draw (7)--(8);
            \end{tikzpicture}
\end{eqnarray*}
\caption{A Lollipop $Lol_{8,4}$.}%
\end{figure}
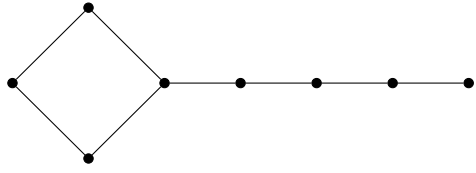

\begin{lemma}\label{lem2}
Let $G$ be a connected graph on $n$ vertices. Then either $diam(\mathcal{T}(G))=diam(G)$ or $diam(\mathcal{T}(G))=diam(\mathcal{L}(G))$ or $Lol_{l+diam(G)+1,2l+1}$ is a diameter subgraph of $G$ for some $1\leq l\leq diam(G)${,} where $diam(\mathcal{T}(G))=diam(G)+1$.
\end{lemma}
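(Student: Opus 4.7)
The plan is to sandwich $D:=diam(\mathcal{T}(G))$ between $diam(G)$ and $diam(G)+1$ and then classify by the type of pair realizing $D$. For the lower bound I would show that $G$ embeds isometrically into $\mathcal{T}(G)$ as the induced subgraph on $V(G)$: any $\mathcal{T}(G)$-path between two vertex-type vertices projects to a walk of no greater length in $G$ by replacing each internal edge-type vertex $(xy)$ by one of its endpoints, chosen greedily so that consecutive replacements coincide or are adjacent in $G$ (a short case check on the four types of consecutive steps). For the upper bound I would bound each pair directly: vertex-vertex by $diam(G)$; vertex-edge $(a,(uv))$ by $1+\min\{d_G(a,u),d_G(a,v)\}\leq diam(G)+1$ using a single incidence step; and edge-edge by the standard fact $diam(\mathcal{L}(G))\leq diam(G)+1$, obtained by stringing the edges along a suitable geodesic in $G$.

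If $D=diam(G)$ the first conclusion holds, so assume $D=diam(G)+1$ and fix a pair $(a,b)$ in $V(\mathcal{T}(G))$ with $d_{\mathcal{T}(G)}(a,b)=D$. A vertex-vertex pair is then impossible by the isometry above. If $a,b\in E(G)$, then $D\leq d_{\mathcal{L}(G)}(a,b)\leq diam(\mathcal{L}(G))\leq diam(G)+1=D$ forces $diam(\mathcal{L}(G))=D$, giving the second conclusion.

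The main case---and the main obstacle---is $a\in V(G)$ and $b=(uv)\in E(G)$. Here the incidence bound together with $D=diam(G)+1$ forces $\min\{d_G(a,u),d_G(a,v)\}=diam(G)$, and since $diam(G)$ is maximal we actually have $d_G(a,u)=d_G(a,v)=diam(G)$. I would then choose shortest $a$--$u$ and $a$--$v$ paths $P_1:a=w_0,\ldots,w_d=u$ and $P_2:a=z_0,\ldots,z_d=v$ (with $d=diam(G)$) whose common initial segment $w_0=z_0,\ldots,w_k=z_k$ has \emph{maximum} length. The crucial claim is that after $w_k$ the two paths share no further vertex: if $w_i=z_j$ with $i,j>k$, then the prefixes $P_1[0\ldots i]$ and $P_2[0\ldots j]$ are both shortest paths from $a$ to this common vertex, so $i=j$, and rerouting $P_2$ as $w_0,\ldots,w_i,z_{i+1},\ldots,z_d$ would give a geodesic pair with strictly longer common prefix, contradicting the maximality of $k$.

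Setting $l:=d-k$, the divergent tails together with the edge $(uv)$ then form a simple odd cycle of length $2l+1$, while the handle $a=w_0,\ldots,w_k$ attaches the remaining $k=d-l$ vertices outside the cycle; this is precisely a copy of $Lol_{l+diam(G)+1,2l+1}$ in $G$, and it is a diameter subgraph because it contains $P_1$, which has $diam(G)+1$ vertices with endpoints at distance $diam(G)$. The inequalities $l\geq 1$ (since $u\neq v$ gives $k<d$) and $l\leq diam(G)$ (since $k\geq 0$) deliver the third conclusion. The essential difficulty is securing simplicity of the cycle rather than a theta-like configuration: once the maximum-common-prefix plus matching-index argument is in place, identifying the subgraph with the stated lollipop is routine bookkeeping.
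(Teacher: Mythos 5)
Your proof is correct and follows essentially the same route as the paper's: both classify a pair realizing $diam(\mathcal{T}(G))$ by type (vertex--vertex, edge--edge, vertex--edge) and, in the mixed case, extract the lollipop from the fact that $d_G(a,u)=d_G(a,v)=diam(G)$ for the edge $(uv)$. Your write-up is in fact more careful at the points the paper leaves implicit, namely the isometric embedding of $V(G)$ into $\mathcal{T}(G)$, the upper bound $diam(\mathcal{T}(G))\le diam(G)+1$, and the maximal-common-prefix argument guaranteeing that the two geodesic tails plus $(uv)$ really form a simple odd cycle.
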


\begin{proof}
  The result can be easily verified for graphs of order $n\leq3$. In consequence, we assume, $n>3$. If $diam(G)< diam(\mathcal{T}(G))$, $diam(\mathcal{L}(G))< diam(\mathcal{T}(G))$ and $Lol_{l+diam(G)+1,2l+1}$ is not a diameter subgraph of $G$ for all $1\leq l\leq diam(G)$ where $diam(\mathcal{T}(G))=diam(G)+1$. We claim that $d_{\mathcal{T}(G)}(u,v)<diam(\mathcal{T}(G))$ for any pair of vertices $u,v$ in $\mathcal{T}(G)$. Let $v_{a}$, $v_{b}$ be two vertices in $\mathcal{T}(G)$ such that $d_{\mathcal{T}(G)}(v_{a},v_{b})\geq diam(\mathcal{T}(G))$.
  Then there must exists $diam(\mathcal{T}(G))-1$ vertices in $\mathcal{T}(G)$, say $v_1, v_2,{\ldots,} v_{diam(\mathcal{T}(G))-1}$, such that $v_i$ is adjacent to $v_{i+1}$,
  $i=1,2,{\ldots,} diam(\mathcal{T}(G))-2$, where $v_{a}$ is adjacent to $v_1$. 
  \begin{enumerate}
  \item If $v_{a}, v_{b}$ are vertices of $G$ then we can assume that $v_i$ is a vertex of $G$,  $i=1,2,{\ldots,}diam(\mathcal{T}(G))-1$. Then, $diam(G)\geq diam(\mathcal{T}(G))$, which is impossible.
  \item If $v_{a}, v_{b}$ are vertices of the line graph of $G$, that is to say if $v_{a}, v_{b}$ are edges of $G$, then we can assume that $v_i$ is an edge of $G$, $i=1,2,{\ldots,}diam(\mathcal{T}(G))-1$. Thus, $diam(\mathcal{L}(G))\geq diam(\mathcal{T}(G))$, which is impossible.
  \item If $v_{a}$ is a vertex of $G$ and $v_{b}$ is an edge of $G$, say $v_{b}=(b_1b_2)$, we can assume without loss of generality $v_i$ is a vertex of $G,$  $i=1,2,{\ldots,}diam(\mathcal{T}(G))-1,$ or $v_i$ is an edge of $G,$  $i=1,2,{\ldots,}diam(\mathcal{T}(G))-1$. Suppose $v_i$ is a vertex of $G,$  $i=1,2,{\ldots,}diam(\mathcal{T}(G))-1$. Then, $d_G(v_a,b_1)\geq diam(\mathcal{T}(G))-1$. Since $diam(G)< diam(\mathcal{T}(G))$ then $d_G(v_{a},b_1)=diam(\mathcal{T}(G))-1=diam(G)$. In addition, $d_G(v_a,b_2)=diam(G)$. Thereby, $Lol_{l+diam(G)+1,2l+1}$ is a diameter subgraph of $G$ \ some $1\leq l\leq diam(G)$ where $diam(\mathcal{T}(G))=diam(G)+1$, which is impossible. Otherwise, suppose $v_i$ is an edge of $G,$  $i=1,2,{\ldots,}diam(\mathcal{T}(G))-1$, with $v_{diam(\mathcal{T}(G))-1}=(cb_1)$ then $d_G(v_{a},b_1)\geq diam(\mathcal{T}(G))-1$. Since $diam(G)<diam(\mathcal{T}(G))$ then $d_G(v_{a},b_1)=diam(\mathcal{T}(G))-1=diam(G)$. Moreover, $d_G(v_a,b_2)=diam(G)$. Thus, $Lol_{l+diam(G)+1,2l+1}$ is a diameter subgraph of $G$ for some $1\leq l\leq diam(G)$ where $diam(\mathcal{T}(G))=diam(G)+1$, which is impossible. 
  
  This is, $d_{\mathcal{T}(G)}(u,v)<diam(\mathcal{T}(G))$ for any pair of vertices $u,v$ in $\mathcal{T}(G)$, which is a contradiction.

  \end{enumerate}
\end{proof}

Before proceeding we need establish the following facts {about the} line graphs.

\begin{lemma}\label{lem3} \cite{DM30}
If $H$ is an induced subgraph of $G$ then $\mathcal{L}(H)$ is an induced subgraph of $\mathcal{L}(G)$.
\end{lemma}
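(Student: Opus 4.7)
The statement is essentially a bookkeeping verification on the definition of the line graph. The plan is to check two things: (i) that $V(\mathcal{L}(H))\subseteq V(\mathcal{L}(G))$, and (ii) that for any two vertices of $\mathcal{L}(H)$, they are adjacent in $\mathcal{L}(H)$ if and only if they are adjacent in $\mathcal{L}(G)$. Together these yield the ``induced subgraph'' conclusion.

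For (i), I would simply unpack the definitions. Since $H$ is an induced subgraph of $G$, we have $V(H)\subseteq V(G)$ and $E(H)=\{(uv)\in E(G):u,v\in V(H)\}$, so in particular $E(H)\subseteq E(G)$. Because the vertex set of a line graph is the edge set of the original graph, this gives $V(\mathcal{L}(H))=E(H)\subseteq E(G)=V(\mathcal{L}(G))$.

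For (ii), take any two edges $e_{1},e_{2}\in E(H)$. By definition of $\mathcal{L}(H)$, the vertices $e_{1}$ and $e_{2}$ are adjacent in $\mathcal{L}(H)$ exactly when $e_{1}$ and $e_{2}$ share a common endpoint in $H$. The crucial point is that the incidence relation between two edges is an intrinsic property of those edges (it only asks whether a vertex appearing in $e_{1}$ also appears in $e_{2}$), so this coincides with sharing a common endpoint in $G$, which is precisely the adjacency condition in $\mathcal{L}(G)$. Hence adjacency in $\mathcal{L}(G)$ restricted to $V(\mathcal{L}(H))$ agrees with adjacency in $\mathcal{L}(H)$.

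There is no real obstacle here; the only subtlety worth flagging is why we need $H$ to be induced rather than merely a subgraph. If $H$ were only a (not necessarily induced) subgraph, then two edges $e_{1},e_{2}\in E(G)$ with both endpoints in $V(H)$ and sharing a vertex might still fail to lie in $E(H)$, so one would only obtain $\mathcal{L}(H)$ as a subgraph of $\mathcal{L}(G)$, not an induced one. The induced hypothesis on $H$ is what forces every such edge of $G$ to appear in $E(H)$, and thus every edge of $\mathcal{L}(G)$ between two vertices of $\mathcal{L}(H)$ to appear in $\mathcal{L}(H)$. This completes the proof.
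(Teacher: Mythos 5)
The paper does not prove this lemma at all: it is quoted verbatim from the reference of Ramane, Ganagi and Gutman, so there is no in-paper argument to compare against. Your two-step verification ($V(\mathcal{L}(H))\subseteq V(\mathcal{L}(G))$, and agreement of the adjacency relations on that vertex set) is the standard direct argument and it does establish the stated lemma correctly.

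However, your closing ``subtlety'' is mistaken, and it is worth correcting because it misidentifies where the hypothesis matters. The conclusion in fact holds for \emph{every} subgraph $H$ of $G$, induced or not. As you yourself observe in part (ii), adjacency of $e_{1},e_{2}$ in a line graph depends only on whether $e_{1}$ and $e_{2}$ share an endpoint, which is intrinsic to the two edges; so for any $e_{1},e_{2}\in E(H)\subseteq E(G)$, adjacency in $\mathcal{L}(H)$ and in $\mathcal{L}(G)$ coincide automatically. The scenario you describe --- an edge $e\in E(G)\setminus E(H)$ with both endpoints in $V(H)$ --- is irrelevant: such an $e$ is simply not a vertex of $\mathcal{L}(H)$, and an induced subgraph is not required to contain every vertex of the ambient graph, only every edge between the vertices it does contain. (Concretely, take $G=K_{3}$ and $H=P_{3}$ a non-induced spanning subgraph: $\mathcal{L}(H)=K_{2}$ is still an induced subgraph of $\mathcal{L}(G)=K_{3}$.) So the induced hypothesis in the lemma is simply not needed for this statement; it is carried along because that is the form in which the lemma is applied later. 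This does not create a gap in your proof of the lemma as stated, but the justification you give for the hypothesis is false and should be removed.
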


{Let denote by $F_1^k$ the path of length $k+1$, $k\geq2$. Let $v_1,v_2,{\ldots,}v_{k+2}$ be the vertices of $F_1^k$ so that for $i=1,2,{\ldots,}k+1,$ $v_i$ is adjacent to $v_{i+1}$.}

{Let $F_2^k$ and $F_3^k$ be the graphs obtained from $F_1^{k+1}$ by adding the edge $(v_1v_3)$, and the edges $(v_1v_3)$ and $(v_{k+1}v_{k+3}),$ respectively. Note that $F_1^{k+1}$, $F_2^{k}$ and $F_3^{k}$ have diameter $k+1.$ (see Fig. 2).}

\begin{figure}[t]
\begin{eqnarray*}
\begin{tikzpicture}\node at (-0.66666,2) {\textbf{{$F_{1}^{k}$}}};\node at (-4.66666,-1) {\textbf{{$F_{2}^{k}$}}};\node at (3.25,-1) {\textbf{{$F_{3}^{k}$}}};\node at (1.2,2.7) {$v_{k+1}$};\node at (2.2,2.7) {$v_{k+2}$};\node at (-3.8,-0.3) {$v_{k+1}$};\node at (-2.8,-0.3) {$v_{k+2}$};\node at (-1.8,-0.3) {$v_{k+3}$};\node at (4.2,-0.3) {$v_{k-1}$};\node at (6.1,-0.3) {$v_{k+1}$};\node at (5.2,-0.3) {$v_{k}$};\node at (6.7,0.73) {$v_{k+2}$};\node at (6.7,-0.73) {$v_{k+3}$};
    \tikzstyle{every node}=[draw,circle,fill=black,minimum size=3.5pt,inner sep=0.2pt]
    \draw
                  (-3.5,3) node (1) [label=below:$v_{1}$] {}
                  (-2.5,3) node (2) [label=below:$v_{2}$] {}
                  (-1.5,3) node (3) [label=below:$v_{3}$] {}
                  (-0.33333,3) node (0) [label=below:] {}
                  (-0.66666,3) node (a) [label=below:] {}
                  (-1,3) node (b) [label=below:] {}
                  (0.2,3) node (4) [label=below:$v_{k}$] {}
                  (1.2,3) node (5) [label=below:] {}
                  (2.2,3) node (6) [label=below:] {}
                  (-8,0.5) node (7) [label=above:$v_{1}$] {}
                  (-8,-0.5) node (8) [label=below:$v_{2}$] {}
                  (-7.5,0) node (9) [label=below:$v_{3}$] {}
                  (-6.5,0) node (10) [label=below:$v_{4}$] {}
                  (-5.5,0) node (11) [label=below:$v_{5}$] {}
                  (-4.33333,0) node (d) [label=below:] {}
                  (-4.66666,0) node (f) [label=below:] {}
                  (-5,0) node (g) [label=below:] {}
                  (-3.8,0) node (12) [label=below:] {}
                  (-2.8,0) node (13) [label=below:] {}
                  (-1.8,0) node (14) [label=below:] {}
                  (-0.3,0.5) node (15) [label=above:$v_{1}$] {}
                  (-0.3,-0.5) node (16) [label=below:$v_{2}$] {}
                  (0.3,0) node (17) [label=below:$v_{3}$] {}
                  (1.3,0) node (18) [label=below:$v_{4}$] {}
                  (2.3,0) node (19) [label=below:$v_{5}$] {}
                  (2.875,0) node (r) [label=below:] {}
                  (3.25,0) node (t) [label=below:] {}
                  (3.675,0) node (b) [label=below:] {}
                  (4.2,0) node (20) [label=below:] {}
                  (5.2,0) node (21) [label=below:] {}
                  (6.2,0) node (22) [label=below:] {}
                  (6.7,0.5) node (23) [label=above:] {}
                  (6.7,-0.5) node (24) [label=below:] {};
        \draw (1)--(2);\draw (2)--(3);\draw (0) (a) (b);\draw (4)--(5);\draw (5)--(6);\draw (7)--(8);\draw (8)--(9);\draw (9)--(7);\draw (9)--(10);\draw (10)--(11);\draw (12)--(13);\draw (13)--(14);\draw (15)--(16);\draw (17)--(16);\draw (15)--(17);\draw (17)--(18);\draw (18)--(19);\draw (20)--(21);\draw (21)--(22);\draw (22)--(23);\draw (23)--(24);\draw (24)--(22);
            \end{tikzpicture}
\end{eqnarray*}
\caption{Graphs $F_1^{k}$, $F_2^{k}$ and $F_3^{k}$.}%
\end{figure}
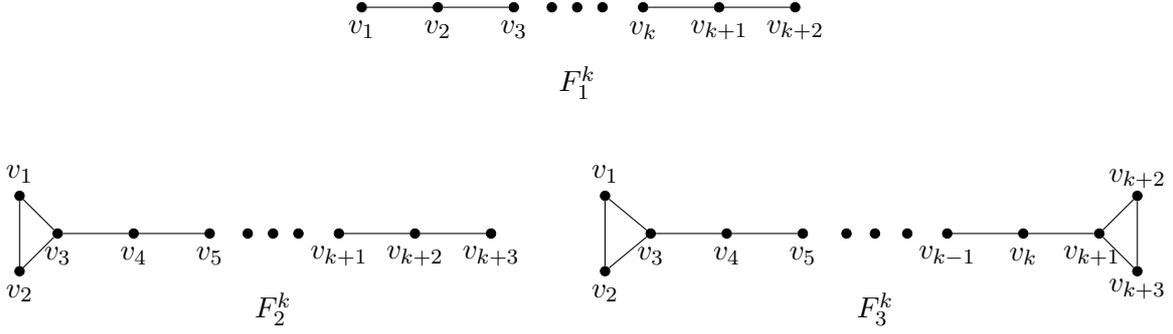

\begin{theorem}\label{teo3}
 Let $k\geq2$. Then $diam(\mathcal{T}(G))\leq k$ if and only  {if all the following conditions fail to hold}
 \begin{itemize}
 \item $F_1^{k+1}$, ${F_2}^k$ and ${F_3}^k$ are induced subgraphs of $G$, and
 \item $F_1^k$ is a diameter path of $G$, and
 \item $Lol_{l+k+1,2l+1}$ is a diameter subgraph of $G$ for some $1\leq l\leq k$ where $diam(G)=k$.
 \end{itemize}
\end{theorem}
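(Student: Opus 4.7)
The plan is to apply Lemma~\ref{lem2}, which confines $diam(\mathcal{T}(G))$ to one of the three values $diam(G)$, $diam(\mathcal{L}(G))$, or $diam(G)+1$ (the last precisely when a Lollipop as described is a diameter subgraph of $G$). Each of the three bullets in the theorem corresponds to one of these cases: bullet~2 to the $diam(G)$-case when $diam(G)=k+1$, bullet~1 to the $diam(\mathcal{L}(G))$-case (and also to the $diam(G)\geq k+2$ sub-case), and bullet~3 to the Lollipop case. I would prove both directions by systematically exploiting this correspondence.

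For the forward direction, I first observe that $G$ and $\mathcal{L}(G)$ embed as isometric induced subgraphs of $\mathcal{T}(G)$ (any shortest path in $\mathcal{T}(G)$ between two vertices of $G$, or between two edges of $G$, can be re-routed to stay on one side without lengthening), so $diam(\mathcal{T}(G))\leq k$ immediately gives both $diam(G)\leq k$ and $diam(\mathcal{L}(G))\leq k$. From these, bullet~2 must fail since it would force $diam(G)=k+1>k$; bullet~1 must fail since each of $F_1^{k+1}$, $F_2^{k}$, $F_3^{k}$ contains two edge-vertices at internal $\mathcal{L}$-distance $k+1$, pushing $diam(\mathcal{L}(G))\geq k+1$; and bullet~3 must fail since it would trigger case~3 of Lemma~\ref{lem2} and give $diam(\mathcal{T}(G))=k+1>k$. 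Conversely, assuming all three bullets fail, I would apply Lemma~\ref{lem2}: if $diam(\mathcal{T}(G))=diam(G)\geq k+1$, a diameter path $P$ of $G$ is induced by shortest-path minimality and either coincides with $F_1^{k}$ (when $diam(G)=k+1$, contradicting the failure of bullet~2) or contains an induced $F_1^{k+1}$ on $k+3$ consecutive vertices (when $diam(G)\geq k+2$, contradicting the failure of bullet~1); if $diam(\mathcal{T}(G))=diam(\mathcal{L}(G))\geq k+1$, an $\mathcal{L}(G)$-geodesic $e_0,\ldots,e_m$ of length $m\geq k+1$ can be traced as a vertex-walk $v_0,\ldots,v_{m+1}$ in $G$ with $e_i=v_iv_{i+1}$, whose induced subgraph is $F_1^{m}$, $F_2^{m-1}$, or $F_3^{m-1}$ (and hence contains $F_1^{k+1}$, $F_2^{k}$, or $F_3^{k}$), again contradicting the failure of bullet~1; finally, the Lollipop sub-case combined with the previous two forces $diam(G)=k$, so that bullet~3 would hold.

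The main obstacle is the structural classification in the $\mathcal{L}(G)$-geodesic case: one must show that along the walk $v_0,\ldots,v_{m+1}$, no chord $v_iv_j$ with $j-i\geq 3$ and no interior chord $v_iv_{i+2}$ with $1\leq i\leq m-2$ can occur, because each such edge produces an explicit shortcut in $\mathcal{L}(G)$---for instance, the edge $(v_iv_{i+2})$ allows the detour $e_{i-1}\to(v_iv_{i+2})\to e_{i+2}$ of $\mathcal{L}$-length two, beating the direct length-three path through $e_i,e_{i+1}$---contradicting the minimality of the geodesic; analogously no vertex repetition $v_i=v_j$ with $j\geq i+3$ is permitted. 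The only surviving freedom consists of chords $v_0v_2$ and/or $v_{m-1}v_{m+1}$ at the two extreme ends, which do not shortcut the overall geodesic, and precisely these configurations match the ``triangle at one end'' and ``triangle at both ends'' shapes $F_2^{m-1}$ and $F_3^{m-1}$, the chord-free case being $F_1^{m}$. This case enumeration, a refinement of the shortcut analysis underlying Case~3 of the proof of Lemma~\ref{lem2}, is the technical heart of the argument.
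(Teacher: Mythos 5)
Your proposal follows essentially the same route as the paper: the forward direction shows each of the three listed configurations forces $diam(\mathcal{T}(G))>k$, and the converse runs through the trichotomy of Lemma~\ref{lem2}, with the line-graph case resolved by the same end-chord analysis on a geodesic of $\mathcal{L}(G)$ (which you in fact spell out more carefully than the paper, whose Case~2 never rules out the interior chords). The one shared caveat is that both you and the paper lower-bound $diam(\mathcal{L}(G))$ (resp.\ $diam(\mathcal{T}(G))$) by distances measured inside an induced subgraph, an inference that is not justified in general and is also left implicit in the paper's appeal to Lemma~\ref{lem3}.
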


\begin{proof}
The result can be easily verified for graphs of order $n\leq3$. In consequence, we assume, $n>3$. 
Let $k\geq2$ and suppose that $diam(\mathcal{T}(G))\leq k$. If $F_1^k$ is a diameter path of $G$, by
Lemma \ref{lem2}, $diam(\mathcal{T}(G))\geq diam(F_1^k)=k+1>k$, which is a contradiction. Suppose that $F_2^k$ is an induced subgraph
of $G$, by Lemma \ref{lem3}, $\mathcal{L}(F_2^k)$ is an induced subgraph of $\mathcal{L}(G)$. Thus, $\mathcal{L}(F_2^k)$ is an induced subgraph of $\mathcal{T}(G)$. Therefore, $diam(\mathcal{T}(G))\geq diam(\mathcal{L}(F_2^k))=k+1$, which is a contradiction. Similarly, we  {prove} that $F_1^{k+1}$ and $F_3^k$ are not induced subgraphs of $G$. Moreover, if $Lol_{l+k+1,2l+1}$ is a diameter subgraph of $G$ for some $1\leq l\leq k$ where $diam(G)=k$ then $diam(\mathcal{T}(G))\geq diam(\mathcal{T}(Lol_{l+k+1,2l+1}))=k+1$, which is a contradiction.
Conversely, suppose that $k\geq2$ and $diam(\mathcal{T}(G))>k$. By Lemma \ref{lem2}, we have the following cases

\begin{itemize}
\item {\bf{Case 1:}} Suppose $diam(\mathcal{T}(G))=diam(G)$.
If $diam(G)=k+1$ then $F_1^k$ is a diameter path in $G$, a contradiction.
If $diam(G)\geq k+2$ then $F_1^{k+1}$ is an induced subgraph of $G$, a contradiction.

\item {\bf{Case 2:}} Suppose $diam(\mathcal{T}(G))=diam(\mathcal{L}(G))$ then there must exists $k+2$ edges in $G$,
say $e_1, e_2,{\ldots,} e_{k+2}$, such that $e_i$ is incident to $e_{i+1}$ in $G$, $i=1,2,{\ldots,}k+1$, and $d_{\mathcal{L}(G)}(e_1,e_{k+2})=k+1$.
Let $e_i=(v_iv_{i+1})$, $i=1,2,{\ldots,} k+1$. Then

$(i)$ If $v_1$ is not adjacent to $v_3$ and $v_{k+1}$
is not adjacent to $v_{k+3}$ then $F_1^{k+1}$ is an induced subgraph of $G$, a contradiction.

$(ii)$ If $v_1$ is adjacent to $v_3$ (or $v_{k+1}$
is adjacent to $v_{k+3}$) then $F_2^{k}$ is an induced subgraph of $G$, a contradiction.

$(iii)$ If $v_1$ is adjacent to $v_3$ and $v_{k+1}$
is adjacent to $v_{k+3}$ then $F_3^{k}$ is an induced subgraph of $G$,  a contradiction.

\item{\bf{Case 3:}} Suppose that $Lol_{l+diam(G)+1,2l+1}$ is a diameter subgraph of $G$ for some $1\leq l\leq diam(G)$ where $diam(\mathcal{T}(G))=diam(G)+1$. If $diam(\mathcal{T}(G))=k+1$ then $Lol_{l+k+1,2l+1}$ is a diameter subgraph of $G$ for some $1\leq l\leq k$ where $diam(G)=k$, a contradiction. If $diam(\mathcal{T}(G))=k+2$ then $F_1^k$ is a diameter path of $G$, a contradiction.
If $diam(\mathcal{T}(G))\geq k+3$ then $F_1^{k+1}$ is an induced subgraph of $G$, a contradiction.

\end{itemize}
\end{proof}

An equivalent result to Theorem \ref{teo3} is given below.

\begin{theorem} \label{teo2}
 Let $k\geq2$. For a connected graph $G$, $diam(\mathcal{T}(G))> k$ if and only if some of the following conditions is verified
\begin{itemize}
\item $F_1^{k+1}$ or ${F_2}^k$ or ${F_3}^k$ is an induced subgraph of $G$, or
\item $F_1^k$ is a diameter path of $G$, or
\item $Lol_{l+k+1,2l+1}$ is a diameter subgraph of $G$ for some $1\leq l\leq k$ where $diam(G)=k$.
\end{itemize}
\end{theorem}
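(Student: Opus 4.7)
The plan is to obtain Theorem \ref{teo2} as the direct logical contrapositive of Theorem \ref{teo3}, since the two statements are assertion-by-assertion negations of one another. Specifically, Theorem \ref{teo3} has the form $P \iff (\neg A \wedge \neg B \wedge \neg C)$, where $P$ is the statement $diam(\mathcal{T}(G))\leq k$ and $A$, $B$, $C$ denote the three structural conditions (existence of $F_1^{k+1}$, $F_2^k$, or $F_3^k$ as induced subgraph; $F_1^k$ being a diameter path; the lollipop condition). Negating both sides and applying De Morgan's law yields $\neg P \iff (A \vee B \vee C)$, which is precisely Theorem \ref{teo2} once we note that $\neg P$ is exactly $diam(\mathcal{T}(G))>k$.

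Concretely, I would write: assume $diam(\mathcal{T}(G))>k$; by Theorem \ref{teo3}, it is false that all three conditions $\neg A$, $\neg B$, $\neg C$ hold simultaneously, so at least one of $A$, $B$, $C$ holds, giving the conclusion. Conversely, if at least one of $A$, $B$, $C$ holds, then the conjunction $\neg A \wedge \neg B \wedge \neg C$ fails, and by Theorem \ref{teo3} one cannot have $diam(\mathcal{T}(G))\leq k$, so $diam(\mathcal{T}(G))>k$.

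There is no substantive obstacle here: all combinatorial work (the case analysis using Lemma \ref{lem2}, the use of the induced-subgraph stability of line graphs from Lemma \ref{lem3}, and the verification that each of $F_1^{k+1}$, $F_2^k$, $F_3^k$, the diameter path $F_1^k$, or the lollipop $Lol_{l+k+1,2l+1}$ forces $diam(\mathcal{T}(G))\geq k+1$) has already been carried out in the proof of Theorem \ref{teo3}. The only thing to be careful about is transcribing the quantifiers faithfully, in particular the scope of the hypothesis $diam(G)=k$ attached to the lollipop clause, so that the ``or'' in Theorem \ref{teo2} ranges over exactly the negations of the three bulleted conditions of Theorem \ref{teo3}. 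Accordingly the write-up will be essentially one paragraph citing Theorem \ref{teo3} and applying the contrapositive.
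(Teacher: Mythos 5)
Your proposal is correct and matches the paper exactly: the paper introduces Theorem \ref{teo2} with the phrase ``An equivalent result to Theorem \ref{teo3} is given below'' and offers no separate proof, relying on precisely the contrapositive/De Morgan reformulation you describe. Nothing further is needed.
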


\begin{theorem}
For a connected graph $G$, $diam(\mathcal{T}(G))=1$ if and only if $G\cong K_2$.
\end{theorem}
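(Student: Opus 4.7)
The plan is to handle the two directions separately, with the forward direction being essentially a direct computation and the converse following from a simple structural argument about vertex-edge incidence.

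For the forward direction, I would take $G \cong K_2$ with vertices $v_1,v_2$ and single edge $e=(v_1v_2)$. Then $V(\mathcal{T}(G)) = \{v_1,v_2,e\}$, and checking the three pairs: $v_1$ and $v_2$ are adjacent in $G$, while $e$ is incident to both $v_1$ and $v_2$. Hence $\mathcal{T}(G) \cong K_3$, which has diameter $1$.

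For the converse, I would argue contrapositively: if $G \not\cong K_2$, then $\mathcal{T}(G)$ is not complete, so $diam(\mathcal{T}(G)) \ne 1$. Since $G$ is connected and $\mathcal{T}(G)$ must contain at least one pair of vertices with a defined positive distance, the only case to rule out is $G$ having $n \geq 3$ vertices (the case $n=1$ gives $\mathcal{T}(G) \cong K_1$, which has no pair of distinct vertices, so diameter $1$ is not achieved). Assume $n \geq 3$. Since $G$ is connected, it has at least one edge, say $e=(uv)$. Pick any third vertex $w \in V(G) \setminus \{u,v\}$, which exists because $n \geq 3$. Then $w$ and $e$ are both vertices of $\mathcal{T}(G)$, and they are non-adjacent there, since $w$ is neither equal to nor incident to $e$ in $G$ (and they are not both vertices or both edges of $G$). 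Therefore $d_{\mathcal{T}(G)}(w,e) \geq 2$, so $diam(\mathcal{T}(G)) \geq 2$.

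I do not expect any serious obstacle here; the only subtlety is deciding how to treat the degenerate case $G \cong K_1$ (no edges, $\mathcal{T}(G)$ trivial), which can be disposed of in a single sentence by noting that the diameter of a one-vertex graph is $0$ (or undefined), so it cannot equal $1$. The result then follows immediately by combining both directions.
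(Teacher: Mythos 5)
Your proof is correct and takes essentially the same approach as the paper: both directions show $\mathcal{T}(K_2)\cong K_3$ and then exhibit a non-adjacent vertex--edge pair in $\mathcal{T}(G)$ when $G\not\cong K_2$ (the paper uses the far endpoint of one of two incident edges, you use a third vertex off a fixed edge, which is the same idea). Your explicit handling of the degenerate $K_1$ case is a small point of extra care the paper omits.
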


\begin{proof}
  If $G\cong K_2$ then all the vertices of $\mathcal{T}(G)$ are adjacent, this is, $\mathcal{T}(G)=K_3$. Thus, $diam(\mathcal{T}(G))=diam(K_3)=1$.

  Conversely, let $diam(\mathcal{T}(G))=1$  and $G\ncong K_2$ then there exists two different edges in $G$, say $e_i=(xy)$
  and $e_j=(yz)$. Thus, $d_{\mathcal{T}(G)}(e_j,x)=2$. Therefore, $diam(\mathcal{T}(G))>1$, a contradiction. 
\end{proof}

\noindent{ The following results characterize some subgraphs of $\mathcal{T}(G)$ according to the diameter of either $\mathcal{L}(G)$ or $G$. Evidently, the diameter of $\mathcal{T}(G)$ is zero if and only if $G\cong K_1$. }

\begin{lemma}\label{lem5}
For a connected graph $G$, if $diam(\mathcal{L}(G))=diam(G)=2$ then some of the following conditions is verified
\begin{itemize}
\item $K_4-e$ or $Lol_{4,3}$, or $C_4$ is an induced subgraph of $G$, or
\item $C_5$ is a diameter subgraph of $G$.
\end{itemize}
\end{lemma}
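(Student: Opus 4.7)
The plan is to unpack the hypothesis $diam(\mathcal{L}(G))=2$ into a concrete four-vertex configuration: two edges of $G$ realizing this distance must be non-incident yet share a common incident edge, which produces a path $v_1 v_2 v_3 v_4$ in $G$ on four distinct vertices whose end edges $v_1 v_2$ and $v_3 v_4$ are non-incident. The hypothesis $diam(G)=2$ then gives $d_G(v_1,v_4)\le 2$, and the argument splits according to whether $v_1 v_4\in E(G)$.

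The bulk of the proof is a case analysis on the presence of the chords $v_1 v_3$ and $v_2 v_4$, and (when $v_1 v_4\notin E(G)$) on a common neighbor $u$ of $v_1$ and $v_4$. When $v_1 v_4\in E(G)$, no chord gives an induced $C_4$ on $\{v_1,v_2,v_3,v_4\}$ and exactly one chord gives an induced $K_4-e$. When $v_1 v_4\notin E(G)$, both chords give $K_4-e$ and exactly one chord gives $Lol_{4,3}$ (a triangle with a pendant) on the same four vertices. If neither chord is present, then $u\notin\{v_1,v_2,v_3,v_4\}$; sub-splitting on whether $uv_2$ and $uv_3$ are edges produces respectively an induced $C_5$ on $\{v_1,v_2,v_3,v_4,u\}$, which contains the diameter path $v_1 v_2 v_3$ and is therefore a diameter subgraph of $G$, an induced $C_4$, or an induced $K_4-e$.

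The main obstacle is the remaining configuration in which $v_1 v_4$ and both chords are edges, so that $S=\{v_1,v_2,v_3,v_4\}$ already induces a $K_4$. Since $diam(G)=2$ rules out $G\cong K_4$, there exists $w\in V(G)\setminus S$, and I plan to case on $k=|N(w)\cap S|$. The values $k\in\{1,2,3\}$ yield $Lol_{4,3}$ (for $k=1$) or $K_4-e$ (for $k\in\{2,3\}$) directly on $\{w\}\cup S'$ for a well-chosen $S'\subset S$. The awkward possibilities are $k=0$ and $k=4$. For $k=0$, I would pick a common neighbor $z$ of $w$ and $v_1$; since $w$ has no $S$-neighbor we get $z\in V(G)\setminus S$, and either $z$ falls into the $k\in\{1,2,3\}$ subcase or $\{v_1,v_2,v_3,z\}$ is a new induced $K_4$ to which $w$ attaches as a single-neighbor pendant, again producing $Lol_{4,3}$. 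If instead $k=4$ holds uniformly over all external vertices, the non-completeness of $G$ forces a non-edge $xy$ with both endpoints outside $S$, and then $\{v_1,v_2,x,y\}$ induces $K_4-xy=K_4-e$, closing the proof.
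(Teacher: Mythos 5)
Your proof is correct and follows the same basic route as the paper's: extract a path $v_1v_2v_3v_4$ on four distinct vertices from two edges at distance two in $\mathcal{L}(G)$, then case on the chords $v_1v_3$, $v_2v_4$ and on the edge $v_1v_4$. The substantive difference is that your case analysis is exhaustive while the paper's is not: the paper treats only the configurations with $v_1v_4\notin E(G)$ (zero, one, or two chords) and the configuration $v_1v_4\in E(G)$ with no chords, and silently omits the configurations where $v_1v_4\in E(G)$ together with at least one chord. With exactly one chord that omitted configuration still yields an induced $K_4-e$ on the four vertices, but with both chords they induce a $K_4$ and none of the target subgraphs is visible on them; this is precisely where your auxiliary argument with an external vertex $w$ and $k=|N(w)\cap S|$ is needed, and that argument checks out, including the $k=0$ detour through a common neighbour $z$ of $w$ and $v_1$, and the uniform $k=4$ case, where a non-edge with both ends outside $S$ exists because $diam(G)=2$ forces $G\ncong K_n$. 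So your proposal in fact repairs a genuine gap in the published proof. Two cosmetic remarks: in the $k=0$ subcase the induced $Lol_{4,3}$ should be exhibited on $\{w,z,v_1,v_2\}$ (the triangle $zv_1v_2$ with pendant $w$) rather than described as $w$ attached to a four-vertex $K_4$; and in the chordless, $v_1v_4$-free case the lemma only needs the $C_5$ to be a diameter subgraph (it contains the diameter path $v_1uv_4$, or $v_1v_2v_3$ as you note), so your finer sub-splitting into induced $C_5$/$C_4$/$K_4-e$ proves slightly more than is required, though it is all correct.
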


\begin{proof}
Let $G$ be a connected graph such that $diam(\mathcal{L}(G))=2$ then there exists $4$ vertices, say $u_1, u_2, u_3, u_4,$  such that $u_i$ is adjacent to $u_{i+1}$, for $i=1,2,3$.

$(i)$ If $u_1$ is not adjacent to $u_3$, $u_2$ is not adjacent to $u_4$ and $u_1$ is not adjacent to $u_4$ then $P_4$ is an induced subgraph of $G$. Moreover, since $diam(G)=2$ then $P_3:u_1vu_4$ is a diameter path of $G$ for some $v$. Hence, $C_5$ is a diameter subgraph of $G$.

$(ii)$ If $u_1$ is adjacent to $u_3$ (or $u_2$ is adjacent to $u_4$) and $u_1$ is not adjacent to $u_4$ then $Lol_{4,3}$ is an induced subgraph of $G$.

$(iii)$ If $u_1$ is adjacent to $u_3$, $u_2$ is adjacent to $u_4$ and $u_1$ is not adjacent to $u_4$ then $K_4-e$ is an induced subgraph of $G$.

$(iv)$ If $u_1$ is adjacent to $u_4$, $u_1$ is not adjacent to $u_3$ and $u_2$ is not adjacent to $u_4$ then $C_4$ is an induced subgraph of $G$. 
\end{proof}

\begin{theorem}
Let $G$ be a connected graph that such {all the following conditions fail to hold}
\begin{itemize}
\item $K_4-e$, $Lol_{4,3}$ and $C_4$ is an induced subgraph of $G$, and
\item $C_5$ is a diameter subgraph of $G$.
\end{itemize}
Then $diam(\mathcal{T}(G))=2$ if and only if $G\cong K_n$ or $G\cong S_n$.
\end{theorem}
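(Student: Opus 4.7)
My plan is to split the biconditional into two directions, treating the converse by direct computation of $\mathcal{T}(K_n)$ and $\mathcal{T}(S_n)$, and the forward direction by combining Theorem~\ref{teo3} (with $k=2$) with a case analysis on $diam(G)$ and a forbidden-subgraph/girth argument. For the converse I would verify $diam(\mathcal{T}(G))=2$ directly: in $\mathcal{T}(K_n)$ with $n\geq 3$, any two vertices of $G$ are adjacent, two non-incident edges are joined by any edge bridging their endpoints, and a vertex-edge pair is linked through an incident edge; in $\mathcal{T}(S_n)$ with $n\geq 3$ the centre of the star is adjacent in $\mathcal{T}(G)$ to every other vertex, so every pair is at distance at most $2$.

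For the forward direction, assume $diam(\mathcal{T}(G))=2$. Invoking Theorem~\ref{teo3} with $k=2$, the absence of $F_1^3=P_5$ as an induced subgraph of $G$ forces $diam(G)\leq 3$, and the absence of $F_1^2=P_4$ as a diameter path of $G$ then forces $diam(G)\leq 2$. If $diam(G)=1$, then $G\cong K_n$, and the case $diam(G)=0$ is excluded since $\mathcal{T}(K_1)=K_1$; so it remains to show that $diam(G)=2$ forces $G\cong S_n$.

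The central step is proving that $G$ must be triangle-free when $diam(G)=2$. Suppose $T=\{a,b,c\}$ is a triangle. A vertex outside $T$ adjacent to exactly one of its vertices would induce $Lol_{4,3}$, while one adjacent to exactly two would induce $K_4-e$; both are forbidden by hypothesis. Hence every vertex is adjacent to $0$ or to all $3$ vertices of $T$. Let $A$ be the set of vertices adjacent to every element of $T$. Applying the same dichotomy to any triangle $\{a,b,u\}$ with $u\in A$ shows both that $T\cup A$ is a clique and that no $v\notin T\cup A$ has a neighbour in $T\cup A$. This forces $d_G(v,a)\geq 3$ for any such $v$, contradicting $diam(G)=2$; hence $V(G)=T\cup A$ is a clique and $diam(G)=1$, a contradiction.

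Given triangle-freeness, combining the absence of induced $C_4$ with the hypothesis that $C_5$ is not a diameter subgraph forces $G$ to be acyclic. The girth cannot be $3$ or $4$. Girth $5$ gives an induced $5$-cycle whose length-$2$ subpaths realise distance $2$ in $G$ (no shortcut is possible by triangle-freeness), exhibiting $C_5$ as a diameter subgraph, a contradiction. For girth $g\geq 6$, four consecutive vertices $v_1v_2v_3v_4$ on a shortest cycle have $d_G(v_1,v_4)=2$ through some common neighbour $w$, which either produces a chord (contradicting the girth) or gives rise to the $5$-cycle $v_1wv_4v_3v_2$ (contradicting $g\geq 6$). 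So $G$ is a tree, and a tree of diameter $2$ is precisely $S_n$. I expect the triangle-free reduction via $Lol_{4,3}$ and $K_4-e$ to be the main obstacle; the girth analysis afterwards is routine.
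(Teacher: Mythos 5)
Your proof is correct, but in the forward direction it takes a genuinely different route from the paper's. The paper argues via Lemma \ref{lem2}: since $diam(\mathcal{T}(G))=2$, one of the three alternatives of that trichotomy holds, and Lemma \ref{lem5} (which is exactly where the forbidden configurations $K_4-e$, $Lol_{4,3}$, $C_4$, $C_5$ enter) rules out the simultaneous case $diam(\mathcal{L}(G))=diam(G)=2$; each surviving case then yields either $diam(G)=1$ (so $G\cong K_n$) or $diam(\mathcal{L}(G))=1$ with $diam(G)=2$ (so all edges are pairwise incident, forcing $G\cong S_n$). You instead invoke Theorem \ref{teo3} with $k=2$ only to pin down $diam(G)\le 2$, and then prove from scratch the purely structural statement that a connected diameter-$2$ graph avoiding induced $K_4-e$, $Lol_{4,3}$, $C_4$ and having no $C_5$ diameter subgraph must be a star: triangle-freeness via the $Lol_{4,3}$/$K_4-e$ dichotomy together with the clique-closure argument on $T\cup A$, and then acyclicity via the girth analysis (girth $3,4$ excluded outright, girth $5$ producing a $C_5$ diameter subgraph since length-two arcs of an induced $5$-cycle realise distance $2$, girth $\ge 6$ contradicted by a common neighbour of $v_1$ and $v_4$). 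Both arguments are sound; yours is longer but more self-contained --- it does not use Lemma \ref{lem2} or Lemma \ref{lem5} directly, and it makes explicit the step ``all edges pairwise incident $\Rightarrow$ triangle or star'' that the paper asserts without proof --- while the paper's is shorter because the forbidden-subgraph content has already been packaged into Lemma \ref{lem5}. One small remark common to both proofs: the stated equivalence fails for $G\cong K_2$ (where $\mathcal{T}(K_2)\cong K_3$ has diameter $1$), so your implicit restriction to $n\ge 3$ in the converse is the right reading of the statement.
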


\begin{proof}
Suppose that none of the three graphs
$K_4-e$, $Lol_{4,3}$ and $C_4$ are induced subgraphs of $G$ 
and that $C_5$ is not a diameter subgraph of $G$. By Lemma \ref{lem5} does not occur that $diam(\mathcal{L}(G))=diam(G)=2$. Now, if $diam(\mathcal{T}(G))=2$ by Lemma \ref{lem2} some of the following cases is verified
\begin{itemize}
  \item {\bf{Case 1:}} $diam(G)=2$. Then $diam(\mathcal{L}(G))=1$ thus any couple of edges of $G$ are incidents. In consequence, $G\cong K_3$ or $G\cong S_n$.
  Since $diam(G)=2$, we concluded that $G\cong S_n$.
  \item {\bf{Case 2:}} $diam(\mathcal{L}(G))=2$. Then $diam(G)=1$ thus any couple of vertices of $G$ are adjacents.
  Therefore, $G\cong K_n$.
  \item {\bf{Case 3:}} $Lol_{3,3}$ is a diameter subgraph of $G$. Then, $diam(G)=1$. Thus, $G\cong K_n$.
\end{itemize}
Conversely, let $G\cong K_n$. Then any couple of vertices in $G$ are at distance 1. Let $e_i=(xy)$ and $e_j=(zw)$ be two different edges in $G$. If $e_i$ and $e_j$ are incident edges, then $d_{\mathcal{L}(G)}(e_i,e_j)=1$.
Otherwise, since $e_k=(yz)$ is an edge in $G$ we have $d_{\mathcal{L}(G)}(e_i,e_j)=2$. Finally, let $e=(xy)$ be an edge of $G$ and let $v$ be a vertex of $G$, if $v=x$ or $v=y$ then $d_{\mathcal{T}(G)}(e,v)=1$. Otherwise, since $xv$ is an edge of $G$ then $d_{\mathcal{T}(G)}(e,v)=2$. Hence,
$diam(\mathcal{T}(G))=2$.

If $G\cong S_n$, then all the edges of $G$ are incidents to a common vertex. Therefore, all vertices are {pairwise incident}
in $\mathcal{L}(G)$ and thus $\mathcal{L}(G)\cong K_{n-1}$. Hence, $diam(\mathcal{L}(G))=1$. Moreover, all $n-1$ vertices are adjacents to a common vertex in
$G$. Then, $diam(G)=2$. Finally, since $diam(\mathcal{L}(G))=1$ then any edge of $G$ and any vertex of $G$ are to distance less than or equal to 2. Therefore, $diam(\mathcal{T}(G))=2$. 
\end{proof}


\subsection{Results for iterated total graphs}

\begin{theorem}\label{teo7}
Let $r\geq1$ and $k\geq 4r+3$. Let $G$ be a connected graph such that 
\begin{equation*}
diam(\mathcal{T}^{r+1}(G))> k-r.
\end{equation*}
Then $F_1^{k-4r-1}$ is an induced subgraph of $G$.
\end{theorem}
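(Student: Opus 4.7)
The plan is to reduce the statement to a simple bound on how much the diameter can grow when passing from a graph to its total graph, and then iterate.

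First, I would establish the preliminary inequality $diam(\mathcal{T}(H)) \leq diam(H)+1$ for every connected graph $H$. This is immediate from Lemma~\ref{lem2}, which forces $diam(\mathcal{T}(H))$ to equal one of $diam(H)$, $diam(\mathcal{L}(H))$, or $diam(H)+1$, together with the standard fact $diam(\mathcal{L}(H)) \leq diam(H)+1$. The latter is a quick calculation: given edges $e$ and $f$ of $H$, a shortest vertex-path of length $d \leq diam(H)$ joining endpoints of $e$ and $f$ produces a walk in $\mathcal{L}(H)$ of length at most $d+1$ by prepending $e$ and appending $f$ to its edge sequence.

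Next, iterating the preliminary inequality $r+1$ times, successively to $H=G,\mathcal{T}(G),\ldots,\mathcal{T}^{r}(G)$, yields
\begin{equation*}
diam(\mathcal{T}^{r+1}(G)) \leq diam(G)+(r+1).
\end{equation*}
Combining this with the hypothesis $diam(\mathcal{T}^{r+1}(G)) > k-r$ gives $diam(G)+r+1 > k-r$, whence $diam(G) \geq k-2r$.

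Finally, I would extract the required induced path. Any diameter path of $G$ is a shortest path and is therefore an induced subgraph; let $P$ be such a path, of length $diam(G) \geq k-2r$. The hypothesis $k \geq 4r+3$ ensures $k-4r-1 \geq 2$, so $F_1^{k-4r-1}$ is well-defined, and since $k-4r \leq k-2r \leq diam(G)$, $P$ contains a contiguous sub-path of length exactly $k-4r$, which is isomorphic to $F_1^{k-4r-1}$. Any contiguous sub-path of an induced path is itself induced, so $F_1^{k-4r-1}$ is an induced subgraph of $G$, completing the proof. The main obstacle, and essentially the only non-trivial ingredient, is the folklore inequality $diam(\mathcal{L}(H)) \leq diam(H)+1$ sketched above; the remainder is routine iteration. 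Incidentally, this approach actually produces the stronger conclusion that $F_1^{k-2r-1}$ is induced in $G$, so the constant of loss in the statement is not tight for this method.
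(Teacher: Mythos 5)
Your proof is correct, and it takes a genuinely different and considerably more economical route than the paper's. The paper proves this theorem by applying Theorem~\ref{teo2} to $\mathcal{T}^{r}(G)$ and then running a case analysis (items a)--e) of its proof) to show that each structural certificate ($F_1^{k-r+1}$, $F_2^{k-r}$ or $F_3^{k-r}$ induced, $F_1^{k-r}$ a diameter path, or the lollipop) forces an induced path in $\mathcal{T}^{r-1}(G)$ whose length has dropped by a bounded amount; iterating that descent through $\mathcal{T}^{r-1}(G), \mathcal{T}^{r-2}(G), \ldots$ accumulates the loss of $4r+1$ in the superscript. You bypass the structural characterization entirely: the single inequality $diam(\mathcal{T}(H))\leq diam(H)+1$ --- which, as you say, follows from Lemma~\ref{lem2} together with the folklore bound $diam(\mathcal{L}(H))\leq diam(H)+1$, and can also be verified directly by bounding the vertex--vertex, edge--edge and vertex--edge distances in $\mathcal{T}(H)$ --- iterates to $diam(\mathcal{T}^{r+1}(G))\leq diam(G)+r+1$, whence $diam(G)\geq k-2r$, and a diameter path, being a shortest path, is an induced path. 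All the steps check out, including the integrality step $diam(G)>k-2r-1\Rightarrow diam(G)\geq k-2r$ and the observation that $k\geq 4r+3$ is exactly what makes $F_1^{k-4r-1}$ well-defined under the paper's convention $k\geq 2$. Your approach buys simplicity, independence from the delicate bookkeeping in the paper's cases a)--e), and, as you note, the strictly stronger conclusion that $F_1^{k-2r-1}$ is an induced subgraph of $G$ (well-defined since $k\geq 4r+3\geq 2r+3$), of which the stated $F_1^{k-4r-1}$ is an induced subpath; what it does not recover is the finer structural information (which of the forbidden configurations of Theorem~\ref{teo2} actually occurs at each level) that the paper's argument tracks along the way.
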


\begin{proof}
Suppose $diam(\mathcal{T}^{r+1}(G))> k-r$. By Theorem \ref{teo2},
\begin{itemize}
\item $F_1^{k-r+1}$ or ${F_2}^{k-r}$ or ${F_3}^{k-r}$ is an induced subgraph of $\mathcal{T}^{r}(G)$, or
\item $F_1^{k-r}$ is a diameter path of $\mathcal{T}^{r}(G)$, or
\item $Lol_{l+k-r+1,2l+1}$ is a diameter subgraph of $\mathcal{T}^r(G)$ for some $1\leq l\leq k-r$ where $diam(\mathcal{T}^{r}(G))=k-r$.
\end{itemize}
Moreover,
\begin{itemize}
\item[a)] If $F_1^{k-r+1}$ is an induced subgraph of $\mathcal{T}^r(G)$ then either $F_1^{k-r-2}$ or $F_1^{k-r+1}$ is an induced subgraph of $\mathcal{T}^{r-1}(G)$ or $F_1^{k-r+1}$ is an induced subgraph of $\mathcal{L}(\mathcal{T}^{r-1}(G))$. Since $\mathcal{L}(F_1^{k-r+2})=F_1^{k-r+1}$ then $F_1^{k-r-2}$ is an induced subgraph of $\mathcal{T}^{r-1}(G)$.

\item[b)] If $F_2^{k-r}$ is an induced subgraph of $\mathcal{T}^r(G)$ then $F_1^{k-r}$ is an induced subgraph of $\mathcal{T}^{r}(G)$. By $a)$,
$F_1^{k-r-3}$ is an induced subgraph of $\mathcal{T}^{r-1}(G)$.


\item[c)] If $F_3^{k-r}$ is an induced subgraph of $\mathcal{T}^{r}(G)$ then $F_1^{k-r-1}$ is an induced subgraph of $\mathcal{T}^{r}(G)$. By $a)$,
$F_1^{k-r-4}$ is an induced subgraph of $\mathcal{T}^{r-1}(G)$.


\item[d)] If $F_1^{k-r}$ is a diameter path of $\mathcal{T}^r(G)$ then either $F_1^{k-r}$ or $F_1^{k-r-1}$ is a diameter path of $\mathcal{T}^{r-1}(G)$ or $F_1^{k-r}$ is a diameter path of $\mathcal{L}(\mathcal{T}^{r-1}(G))$. Since $\mathcal{L}(F_1^{k-r+1})=F_1^{k-r}$ then $F_1^{k-r-1}$ is an induced subgraph of $\mathcal{T}^{r-1}(G)$.


\item[e)] If $Lol_{l+k-r+1,2l+1}$ is a diameter subgraph of $\mathcal{T}^r(G)$ for some $1\leq l\leq k-r$ where $diam(\mathcal{T}^r(G))=k-r$, then $F_1^{k-r-1}$ is a diameter path of $\mathcal{T}^r(G)$. By $d)$, $F_1^{k-r-2}$ is a diameter path of $\mathcal{T}^{r-1}(G)$.

\end{itemize}

Therefore, $F_1^{k-r-4}$ is an induced subgraph of $\mathcal{T}^{r-1}(G)$. By a), $F_1^{k-r-7}$ is an induced subgraph of $\mathcal{T}^{r-2}(G)$. Then, $F_1^{k-r-10}$  is an induced subgraph of $\mathcal{T}^{r-3}(G)$. Following this process we concluded $F_1^{k-4r-1}$ is an induced subgraph of $G$.
\end{proof}

\begin{theorem}
Let $r\geq1$ and $k\geq2r+2$. Let $F_1^{k-2r}$ be an induced subgraph of $G$ then 
\begin{equation*}
diam(\mathcal{T}^{r+1}(G))>k-r.
\end{equation*}
\end{theorem}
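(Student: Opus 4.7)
My plan is to propagate a long induced path upward through the iterated total graph tower, then convert the final induced subgraph into a diameter lower bound via Theorem~\ref{teo2}. Two constructive lemmas are needed: a \emph{path-lift lemma} producing an induced $F_1^{m+1}$ in $\mathcal{T}(H)$ from an induced $F_1^m$ in $H$, and a \emph{triangle-cap lemma} producing the richer induced $F_3^{m+1}$ in $\mathcal{T}(H)$ from the same hypothesis. The triangle-cap lemma is the key input at the last step, since Theorem~\ref{teo2} treats the presence of an induced $F_3^{k-r}$ as a sufficient condition for $diam(\mathcal{T}(\mathcal{T}^r(G)))>k-r$.

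For the path-lift lemma (requiring $m\geq 1$), I will let the induced path realizing $F_1^m$ in $H$ be $u_0u_1\cdots u_{m+1}$ with edges $e_i=(u_{i-1}u_i)$, and show that the sequence $u_0,e_1,e_2,\ldots,e_{m+1},u_{m+1}$ is an induced $F_1^{m+1}$ in $\mathcal{T}(H)$: consecutive adjacencies come from the incidences $u_0\sim e_1$ and $u_{m+1}\sim e_{m+1}$ together with the fact that $e_i,e_{i+1}$ share $u_i$, while non-edges are blocked because the $u_i$'s form an induced path in $H$, because $u_0$ (resp.\ $u_{m+1}$) is incident, among $e_1,\ldots,e_{m+1}$, only to $e_1$ (resp.\ $e_{m+1}$), and because $e_i,e_j$ with $|i-j|\geq 2$ share no vertex. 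For the triangle-cap lemma (requiring $m\geq 2$), I will show that the vertex set $\{u_0,e_1,u_1,u_2,\ldots,u_{m+1},e_{m+1}\}$ induces $F_3^{m+1}$ in $\mathcal{T}(H)$: the triples $\{u_0,u_1,e_1\}$ and $\{u_m,u_{m+1},e_{m+1}\}$ are mutually incident, yielding the two $F_3$-triangles at the ends, and all remaining non-edges are again blocked by the induced-path hypothesis together with the key fact that $e_1=(u_0u_1)$ and $e_{m+1}=(u_mu_{m+1})$ share no vertex as soon as $m\geq 2$.

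Given these lemmas, the theorem follows by direct iteration. Starting from the hypothesis that $F_1^{k-2r}$ is induced in $G$, I apply the path-lift lemma $r-1$ times (the $F_1$-index stays $\geq 2$ throughout since it starts at $k-2r\geq 2$), arriving at $F_1^{k-r-1}$ induced in $\mathcal{T}^{r-1}(G)$. The hypotheses $k\geq 2r+2$ and $r\geq 1$ give $k-r-1\geq 2$, so the triangle-cap lemma applies and yields $F_3^{k-r}$ induced in $\mathcal{T}^r(G)$. Finally, Theorem~\ref{teo2} applied to the graph $\mathcal{T}^r(G)$ with parameter $k-r\geq 2$ converts the induced $F_3^{k-r}$ into the bound $diam(\mathcal{T}(\mathcal{T}^r(G)))=diam(\mathcal{T}^{r+1}(G))>k-r$, completing the proof.

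The main obstacle is the careful bookkeeping of non-adjacencies in the triangle-cap lemma, and in particular verifying that $e_1=(u_0u_1)$ and $e_{m+1}=(u_mu_{m+1})$ are not adjacent in $\mathcal{T}(H)$, which requires $m\geq 2$; this single inequality is the only place where the precise bound $k\geq 2r+2$ of the theorem statement enters the argument, namely through the inequality $k-r-1\geq 2$ needed to trigger the triangle-cap lemma at the last step of the iteration.
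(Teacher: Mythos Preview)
Your proposal is correct and follows essentially the same route as the paper: iterate a path-lift $r-1$ times to obtain an induced $F_1^{k-r-1}$ in $\mathcal{T}^{r-1}(G)$, then cap both ends with the incident edge-vertices to produce an induced $F_3^{k-r}$ in $\mathcal{T}^{r}(G)$, and finally invoke Theorem~\ref{teo2}. Your triangle-cap construction $\{u_0,e_1,u_1,\ldots,u_{m+1},e_{m+1}\}$ is exactly the vertex set the paper writes as $v_1,(v_1v_2),v_2,\ldots,v_{k-r},(v_{k-r}v_{k-r+1}),v_{k-r+1}$; you simply supply the non-adjacency checks that the paper leaves implicit.
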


\begin{proof}
Suppose $F_1^{k-2r}$ is an induced subgraph of $G$ then $F_1^{k-2r+1}$ is an induced subgraph of $\mathcal{T}(G)$. Moreover, $F_1^{k-2r+2}$
is an induced subgraph of $\mathcal{T}^2(G)$. Following this process we concluded that $F_1^{k-r-1}$ is an induced subgraph of $\mathcal{T}^{r-1}(G)$.
Then, the graph with vertices $$v_1, (v_1v_2), v_2,{\ldots,} v_{k-r},(v_{k-r}v_{k-r+1}),\\v_{k-r+1},$$ is an induced subgraph of $\mathcal{T}^r(G)$ isomorphic to $F_3^{k-r}$. By Theorem \ref{teo2}, $diam(\mathcal{T}^{r+1}(G))>k-r$. 
\end{proof}

\subsection{Results for iterated line graphs}

Let $P_{k-1}$ be the path with vertices $v_1, v_2,{\ldots,} v_{k-1}$, where $v_i$ is adjacent to
$v_{i+1}$, $i=1,2,{\ldots,}k-2$, $k\geq3$. Let $F_4^k$ be the graph obtained from $P_{k-1}$ by joining
{two new} vertices to {the vertex} $v_1$ and another {two new} vertices {the vertex} $v_{k-1}$. {Thus} $F_4^k$ has $k+3$ vertices
and $k+2$ edges. Let $P_{k+1}$ be {a} path {on the} vertices $v_1,v_2,{\ldots,}v_{k+1}$, where $v_i$ is adjacent
to $v_{i+1}$, $i=1,2,{\ldots,}k$,\ $k\geq1$. Let $F_5^k$ be the graph obtained from $P_{k+1}$ by joining {two new} vertices to the vertex $v_{k+1}$. {Note that $F_4^{k}$ and $F_5^{k}$ have diameter $k+1.$ (see Fig. 3).}

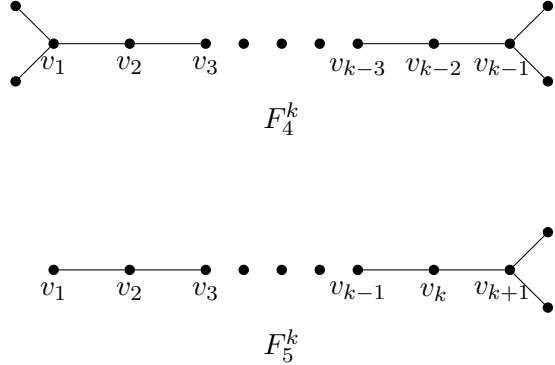
\begin{figure}[h]
\begin{eqnarray*}
\begin{tikzpicture}\node at (0,1) {\textbf{{$F_{4}^{k}$}}};\node at (0,-2) {\textbf{{$F_{5}^{k}$}}};\node at (1,1.68) {$v_{k-3}$};\node at (2,1.68) {$v_{k-2}$};\node at (2.9,1.68) {$v_{k-1}$};\node at (1,-1.28) {$v_{k-1}$};\node at (2.9,-1.28) {$v_{k+1}$};
    \tikzstyle{every node}=[draw,circle,fill=black,minimum size=3.5pt,inner sep=0pt]
    \draw
                  (-3.5,2.5) node (1) [label=below:] {}
                  (-3.5,1.5) node (2) [label=below:] {}
                  (-3,2) node (3) [label=below:$v_{1}$] {}
                  (-0.5,2) node (q) [label=below:] {}
                  (0,2) node (e) [label=below:] {}
                  (0.5,2) node (z) [label=below:] {}
                  (-2,2) node (4) [label=below:$v_{2}$] {}
                  (-1,2) node (5) [label=below:$v_{3}$] {}
                  (1,2) node (6) [label=below:] {}
                  (2,2) node (7) [label=below:] {}
                  (3,2) node (8) [label=below:] {}
                  (3.5,1.5) node (9) [label=below:] {}
                  (3.5,2.5) node (10) [label=below:] {}
                  (-3,-1) node (11) [label=below:$v_{1}$] {}
                  (-2,-1) node (12) [label=below:$v_{2}$] {}
                  (-1,-1) node (13) [label=below:$v_{3}$] {}
                  (-0.5,-1) node (p) [label=below:] {}
                  (0,-1) node (l) [label=below:] {}
                  (0.5,-1) node (z) [label=below:] {}
                  (1,-1) node (14) [label=below:] {}
                  (2,-1) node (15) [label=below:$v_{k}$] {}
                  (3,-1) node (16) [label=below:] {}
                  (3.5,-0.5) node (17) [label=below:] {}
                  (3.5,-1.5) node (18) [label=below:] {};
        \draw (1)--(3);\draw (2)--(3);\draw (3)--(4);\draw (4)--(5);\draw (6)--(7);\draw (7)--(8);\draw (8)--(9);\draw (8)--(10);\draw (11)--(12);\draw (12)--(13);\draw (14)--(15);\draw (15)--(16);\draw (16)--(17);\draw (16)--(18);
            \end{tikzpicture}
\end{eqnarray*}
\caption{Graphs $F_4^{k}$ and $F_5^{k}$.}%
\end{figure}



\begin{lemma}\cite{DM31}\label{1433}
Let $G$ be a connected graph with $n\geq3$ vertices. Let $k\geq2$. Then $diam(\mathcal{L}(G))> k$, if and only if
either $F_1^{k+1}$ or $F_2^{k}$ or $F_3^{k}$ is an induced subgraph of $G$.
\end{lemma}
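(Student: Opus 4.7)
The plan is to prove the equivalence in two directions, each guided by the observation that a shortest path of length $k+1$ in $\mathcal{L}(G)$ corresponds to a sequence of $k+2$ incident edges in $G$ whose underlying vertex structure is tightly constrained.

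For the easy direction ($\Leftarrow$), I would use Lemma \ref{lem3}: if any of $F_1^{k+1}$, $F_2^k$, $F_3^k$ is induced in $G$, then its line graph is induced in $\mathcal{L}(G)$. It then suffices to verify $diam(\mathcal{L}(F_i^k))\geq k+1$ in each case. The line graph of $F_1^{k+1}$ is a path on $k+2$ vertices, so its diameter is exactly $k+1$. For $F_2^k$, the chord $(v_1v_3)$ becomes a vertex attached as a triangle to the start of the path $(v_1v_2),(v_2v_3),\ldots,(v_{k+2}v_{k+3})$; a direct check shows the two endpoints of this path remain at distance $k+1$ in $\mathcal{L}(F_2^k)$. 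The case $F_3^k$ is handled symmetrically, with a triangle attached at each end.

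For the hard direction ($\Rightarrow$), I would fix a geodesic $e_1,e_2,\ldots,e_{k+2}$ of length $k+1$ in $\mathcal{L}(G)$. Since two distinct incident edges in $G$ share exactly one vertex, the common vertex $u_i$ of $e_i$ and $e_{i+1}$ is well-defined for $1\leq i\leq k+1$; let $u_0$ and $u_{k+2}$ be the remaining endpoints of $e_1$ and $e_{k+2}$, so that $e_i=(u_{i-1}u_i)$. The induced subgraph $H=G[\{u_0,u_1,\ldots,u_{k+2}\}]$ is the object I would analyze. The structural claims to establish are: (i) the vertices $u_0,u_1,\ldots,u_{k+2}$ are pairwise distinct; and (ii) the only chords $(u_au_b)$ with $b-a\geq 2$ that can occur in $H$ are $(u_0u_2)$ and $(u_ku_{k+2})$. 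Both are proved by the same principle: any coincidence $u_i=u_j$ or extra chord yields a new vertex of $\mathcal{L}(G)$ incident to two non-consecutive edges of the geodesic, producing a walk from $e_1$ to $e_{k+2}$ of length strictly less than $k+1$, a contradiction. A straightforward counting shows that for a chord $(u_au_b)$ with $1\leq a<b\leq k+1$ and $b-a\geq 2$ the alternate path has length at most $k$, while in the boundary case $a=0$ the only $b$ yielding length exactly $k+1$ is $b=2$; symmetrically at the other end, $b=k+2$ forces $a=k$. Once (i) and (ii) are in hand, $H$ must be isomorphic to $F_1^{k+1}$, $F_2^k$, or $F_3^k$ according to whether zero, one, or both of the boundary chords are present.

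The step I expect to be the main obstacle is the case analysis in (i)--(ii). Each candidate identification $u_i=u_j$ and each candidate chord $(u_au_b)$ must be converted into an explicit shorter walk, and the boundary behavior at $u_0$ and $u_{k+2}$ differs from the interior because those two vertices belong to only one $e_i$ each. Organizing this bookkeeping uniformly---for instance by parametrizing each potential shortcut by where it leaves and re-enters the geodesic---will be the main technical effort, but once the accounting is set up each subcase reduces to an immediate length comparison.
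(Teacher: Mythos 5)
The paper offers no proof of this lemma at all: it is imported verbatim from \cite{DM31}, so there is no in-paper argument to compare yours against. Judged on its own terms, your forward direction is correct and complete in outline. Extracting a geodesic $e_1,\ldots,e_{k+2}$, observing that $e_i=(u_{i-1}u_i)$ for well-defined vertices $u_0,\ldots,u_{k+2}$, proving these are pairwise distinct, and ruling out every chord except $(u_0u_2)$ and $(u_ku_{k+2})$ via the length count $k+2-(b-a)$ (adjusted at the two boundary vertices, which lie on only one edge of the geodesic) is exactly the right bookkeeping, and it does force $G[\{u_0,\ldots,u_{k+2}\}]$ to be one of $F_1^{k+1}$, $F_2^k$, $F_3^k$.

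The backward direction, however, contains a genuine gap that cannot be repaired for $k\geq 3$. From ``$\mathcal{L}(F_i)$ is an induced subgraph of $\mathcal{L}(G)$ and $diam(\mathcal{L}(F_i))\geq k+1$'' you cannot conclude $diam(\mathcal{L}(G))\geq k+1$: passing to an induced subgraph can only \emph{increase} distances, so a high-diameter induced subgraph says nothing about the diameter of the ambient graph. In fact the implication you are trying to prove is false as stated: let $G$ consist of the path $v_1v_2\cdots v_{k+3}$ together with one extra vertex $w$ joined to every $v_i$. Then $F_1^{k+1}=P_{k+3}$ is induced on $\{v_1,\ldots,v_{k+3}\}$, yet every pair of edges of $G$ lies at distance at most $3$ in $\mathcal{L}(G)$ (route through the clique formed by the edges $(wv_i)$), so $diam(\mathcal{L}(G))=3\leq k$ whenever $k\geq 3$. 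Your argument is salvageable only for $k=2$, where the required bound $d_{\mathcal{L}(G)}(e_1,e_{k+2})\geq 3$ amounts to the non-adjacency in $G$ of certain pairs among $v_1,v_2$ and $v_{k+2},v_{k+3}$, and non-adjacency is precisely what inducedness preserves. For general $k$ the correct hypothesis must control distances in $G$ (an isometric copy of $F_i$, or an explicit condition such as $d_G(\{v_1,v_2\},\{v_{k+2},v_{k+3}\})\geq k$), not merely an induced copy. You are in good company --- the paper itself leans on the same unjustified ``induced subgraph of large diameter'' inference in the proof of Theorem \ref{teo3} --- but it is a real gap, and it is the step you should have flagged as the main obstacle rather than the (routine) case analysis in the forward direction.
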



Considering the Lemma 2.4, the follows results are obtained.

\begin{theorem}\label{teo2.7}
Let $r\geq1$ and $k\geq2r+3$. Let $G$ be a connected graph such that 
\begin{equation*}
diam(\mathcal{L}^{r+1}(G))>k-r.
\end{equation*}
Then $F_1^{k-2r-1}$ is an induced subgraph of $G$.
\end{theorem}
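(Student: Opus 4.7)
The plan is to bypass a case-by-case imitation of Theorem \ref{teo7} and instead reduce everything to the classical line-graph diameter inequality
\begin{equation*}
diam(\mathcal{L}(H)) \leq diam(H) + 1,
\end{equation*}
valid for every connected graph $H$ with at least one edge. Since this bound is not isolated as a lemma in the paper, I would record it inside the proof with a one-line justification: given two edges $e, f \in E(H)$, concatenate $e$, the edge sequence of a shortest path in $H$ joining an endpoint of $e$ to an endpoint of $f$, and then $f$; this produces a walk in $\mathcal{L}(H)$ of length at most $diam(H)+1$.

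A one-step induction on $r$ then upgrades the inequality to
\begin{equation*}
diam(\mathcal{L}^{r+1}(G)) \leq diam(G) + (r+1).
\end{equation*}
Substituting the hypothesis $diam(\mathcal{L}^{r+1}(G)) > k-r$, equivalently $diam(\mathcal{L}^{r+1}(G)) \geq k-r+1$, yields $diam(G) \geq k-2r$. The condition $k \geq 2r+3$ in the statement is exactly what is needed so that $k-2r-1 \geq 2$, matching the range of the index in the definition of $F_1^{k-2r-1}$ given at the start of Section 2.

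To finish, pick $u, v \in V(G)$ with $d_G(u,v) = diam(G) \geq k-2r$, and let $P$ be any $u$-$v$ geodesic in $G$. Every geodesic is an induced path, so restricting $P$ to its first $k-2r+1$ vertices exhibits an induced copy of the path on $k-2r+1$ vertices, i.e.\ of $F_1^{k-2r-1}$, inside $G$, which is precisely the conclusion.

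The principal obstacle is stylistic rather than technical: one might hope to mirror the case-by-case unwinding used in Theorem \ref{teo7} by applying Lemma \ref{1433} once per iterated line graph. This route is awkward because an induced copy of $F_1^m$ in $\mathcal{L}(H)$ only yields a (possibly non-induced) path of length $m+2$ in $H$, and the chords that may decorate this path block any immediate lower bound on $diam(H)$ or any clean extraction of a shorter induced path. Using the diameter inequality instead sidesteps this difficulty entirely and produces exactly the constant $k-2r-1$ appearing in the statement.
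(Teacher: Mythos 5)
Your proof is correct, but it takes a genuinely different route from the paper. The paper applies Lemma \ref{1433} once to get $F_1^{k-r+1}$ or $F_2^{k-r}$ or $F_3^{k-r}$ induced in $\mathcal{L}^r(G)$, extracts from these an induced path $F_1^{k-r-1}$ in $\mathcal{L}^r(G)$, and then pulls that induced path back through the line-graph operation one level at a time, losing one unit of length per level and justifying each descent only by the identity $\mathcal{L}(F_1^{m})=F_1^{m-1}$. You instead work entirely at the level of diameters: the elementary inequality $diam(\mathcal{L}(H))\leq diam(H)+1$, iterated $r+1$ times, gives $diam(G)\geq k-2r$, and a geodesic of that length is automatically an induced path on $k-2r+1$ vertices, i.e.\ a copy of $F_1^{k-2r-1}$; the arithmetic of the indices checks out, and the hypothesis $k\geq 2r+3$ is exactly what keeps the superscript in the admissible range $k-2r-1\geq 2$. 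What your approach buys is worth noting: the concern you raise about the case-by-case route is legitimate, since an induced path $e_1,\ldots,e_{m+2}$ in $\mathcal{L}(H)$ only forces the existence of a path $u_1\cdots u_{m+3}$ in $H$ whose chords are unconstrained (a chord of the underlying path creates a new vertex of $\mathcal{L}(H)$ but no new adjacency among the $e_i$), so the paper's one-line justification for descending from $\mathcal{L}^{j}(G)$ to $\mathcal{L}^{j-1}(G)$ does not by itself establish that the shorter path is induced; your diameter argument sidesteps this entirely and is therefore not only different but arguably more complete. The only points you should make explicit in a final write-up are the standing facts that iterated line graphs of a connected graph remain connected (so the diameters are defined) and that the hypothesis $diam(\mathcal{L}^{r+1}(G))>k-r\geq r+3$ rules out the degenerate cases $K_1$ and $K_2$ along the iteration.
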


\begin{proof}
Suppose $diam(\mathcal{L}^{r+1}(G))>k-r$. By Lemma \ref{1433}, $F_1^{k-r+1}$ or $F_2^{k-r}$ or $F_3^{k-r}$ is an induced subgraph of $\mathcal{L}^{r}(G)$. Then, $F_1^{k-r-1}$ is an induced subgraph of $\mathcal{L}^r(G)$. Since $\mathcal{L}(F_1^{k-r})=F_1^{k-r-1}$.
Then, $F_1^{k-r-2}$ is an induced subgraph of $\mathcal{L}^{r-1}(G)$. Thus, $F_1^{k-r-3}$ is an induced subgraph of $\mathcal{L}^{r-2}(G)$. Then, $F_1^{k-r-4}$  is an induced subgraph of $\mathcal{L}^{r-3}(G)$. Following this process we concluded that $F_1^{k-2r-1}$ is an induced subgraph of $G$. 
\end{proof}

\begin{theorem}\label{teo2.8}
Let $1\leq r<k-1$. Let $F_1^{k+1}$ or $F_4^k$ or $F_5^k$ be an induced subgraph of $G$ then 
\begin{equation*}
diam(\mathcal{L}^{r+1}(G))>k-r.
\end{equation*}
\end{theorem}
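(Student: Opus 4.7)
The plan is to reduce the conclusion to Lemma \ref{1433}: applied with the graph $\mathcal{L}^r(G)$ and parameter $m=k-r$, that lemma yields $diam(\mathcal{L}^{r+1}(G))>k-r$ as soon as one of $F_1^{k-r+1}$, $F_2^{k-r}$, $F_3^{k-r}$ is an induced subgraph of $\mathcal{L}^r(G)$. The hypothesis $r<k-1$ forces $k-r\geq 2$, which is precisely the parameter range in which Lemma \ref{1433} is available, so the task is to exhibit the required induced subgraph in $\mathcal{L}^r(G)$ for each of the three hypotheses on $G$. For $F_1^{k+1}\subseteq G$ the argument is immediate: the identity $\mathcal{L}(F_1^{m+1})=F_1^m$ combined with Lemma \ref{lem3} gives, by an easy induction on $r$, that $F_1^{k+1-r}$ is induced in $\mathcal{L}^r(G)$, and Lemma \ref{1433} finishes.

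The remaining two hypotheses are handled by a direct structural computation of the line graphs $\mathcal{L}(F_4^k)$ and $\mathcal{L}(F_5^k)$. In each case, the two pendant edges at a backbone endpoint, together with the incident backbone edge, form a triangle in the line graph, producing a single triangle attached to one end of a path (matching the defining pattern of $F_2^{k-1}$) for $\mathcal{L}(F_5^k)$, and triangles attached at both ends of a path (matching $F_3^{k-1}$) for $\mathcal{L}(F_4^k)$. A vertex count plus an explicit labeling confirms the isomorphisms $\mathcal{L}(F_5^k)\cong F_2^{k-1}$ and $\mathcal{L}(F_4^k)\cong F_3^{k-1}$, and Lemma \ref{lem3} then places $F_2^{k-1}$ or $F_3^{k-1}$ as an induced subgraph of $\mathcal{L}(G)$; Lemma \ref{1433} with parameter $k-1$ handles $r=1$ at once.

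To push to $r\geq 2$, I would observe that both $\mathcal{L}(F_2^m)$ and $\mathcal{L}(F_3^m)$ contain $F_1^m$ as an induced subgraph: the $m+2$ line-graph-vertices corresponding to the original path edges $v_iv_{i+1}$ span an induced path, since the extra chord-edges $(v_1v_3)$ and $(v_{m+1}v_{m+3})$ are themselves the only other line-graph-vertices and they do not create adjacencies among the path-edge-vertices beyond the consecutive ones. Specializing to $m=k-1$ and applying Lemma \ref{lem3} gives $F_1^{k-1}$ induced in $\mathcal{L}^2(G)$, at which point the induction of the first case takes over and carries $F_1^{k-r+1}$ through the iterated line graphs for all $r$ in the range $2\leq r\leq k-2$. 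The main obstacle I expect is clerical rather than conceptual: fixing the explicit vertex correspondences that realize $\mathcal{L}(F_4^k)\cong F_3^{k-1}$ and $\mathcal{L}(F_5^k)\cong F_2^{k-1}$, and confirming that the chosen backbone paths in $\mathcal{L}(F_2^{k-1})$ and $\mathcal{L}(F_3^{k-1})$ are genuinely induced. The condition $r<k-1$ is exactly the boundary needed to keep Lemma \ref{1433}'s parameter at or above $2$ throughout the entire induction.
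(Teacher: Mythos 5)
Your argument is correct, and in the step that matters it takes a different (and sounder) route than the paper. The paper disposes of all three hypotheses in one stroke by asserting the identities $\mathcal{L}^r(F_1^{k+1})=F_1^{k-r+1}$, $\mathcal{L}^r(F_4^{k})=F_3^{k-r}$ and $\mathcal{L}^r(F_5^{k})=F_2^{k-r}$ and then invoking Lemma \ref{1433} at level $r$ with the subgraphs $F_1^{k-r+1}$, $F_3^{k-r}$, $F_2^{k-r}$ respectively. The first identity is fine, and the latter two are fine for $r=1$, but for $r\geq 2$ they fail on a vertex count: $F_3^{m}$ has $m+4$ edges, so $\mathcal{L}(F_3^{m})$ has $m+4$ vertices while $F_3^{m-1}$ has only $m+2$ (and similarly for $F_2^{m}$, whose line graph acquires a $K_4-e$ near the triangle rather than reproducing a smaller $F_3$ or $F_2$). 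Your proposal only uses the isomorphisms $\mathcal{L}(F_4^k)\cong F_3^{k-1}$ and $\mathcal{L}(F_5^k)\cong F_2^{k-1}$ once, handles $r=1$ directly from Lemma \ref{1433}, and for $r\geq 2$ switches to tracking the induced path: the $m+2$ line-graph vertices coming from the backbone edges of $F_2^{m}$ or $F_3^{m}$ are pairwise adjacent exactly when the edges are consecutive, so they span an induced $F_1^{m}$, and from there the clean identity $\mathcal{L}(F_1^{j+1})=F_1^{j}$ carries $F_1^{k-r+1}$ into $\mathcal{L}^r(G)$, which is exactly what Lemma \ref{1433} needs (the hypothesis $r<k-1$ keeping its parameter $k-r\geq 2$ throughout, as you note). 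What your approach buys is a proof that actually survives the iteration for $r\geq 2$; what the paper's buys is brevity, at the cost of an identity that is not literally true beyond the first line graph. The only items left to your ``clerical'' list --- the explicit labelings realizing $\mathcal{L}(F_4^k)\cong F_3^{k-1}$ and $\mathcal{L}(F_5^k)\cong F_2^{k-1}$, and the inducedness of the backbone path --- all check out routinely.
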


\begin{proof}
Suppose $F_1^{k+1}$ or $F_4^k$ or $F_5^{k}$ is an induced subgraph of $G$. By Lemma \ref{lem3}, $\mathcal{L}^r(F_1^{k+1})$ or $\mathcal{L}^r(F_4^{k})$ or
$\mathcal{L}^r(F_5^{k})$ is an induced subgraph of $\mathcal{L}^r(G)$.
Moreover, $\mathcal{L}^r(F_1^{k+1})=F_1^{k-r+1}$, $\mathcal{L}^r(F_4^{k})=F_3^{k-r}$ and  $\mathcal{L}^r(F_5^{k})=F_2^{k-r}$. By Lemma \ref{1433}, $diam(\mathcal{L}^{r+1}(G))>k-r$. 
\end{proof}

\section{Energy of iterated graphs}

In this section, we derive bounds on the incidence energy of iterated total graphs of regular graphs.
Futhermore, we construct new families of nonisomorphic cospectral graphs.

\subsection{Incidence energy of iterated graphs}

The basic properties of iterated line graph sequences are summarized in the articles \cite{Buckley,Harary}.

The line graph of a regular graph is a regular graph. In particular, the line graph of a regular graph
of order $n_0$ and degree $r_0$ is a regular graph of order $n_1=\dfrac{1}{2}r_0n_0$ and degree
$r_1=2r_0-2$. Consequently, the order and degree of $\mathcal{L}^{k}(G)$ are (see \cite{Buckley,Harary}):

$n_k=\dfrac{1}{2}r_{k-1}n_{k-1}$ and $r_{k}=2r_{k-1}-2$
where $n_{k-1}$ and $r_{k-1}$ stand for the order and degree of $\mathcal{L}^{k-1}(G)$. Therefore,

\begin{equation*}
r_k=2^kr_0-2^{k+1}+2
\end{equation*}
and
\begin{equation*}
n_k=\dfrac{n_0}{2^k}\prod_{i=0}^{k-1}r_i=\dfrac{n_0}{2^k}\prod_{i=0}^{k-1}\Big(2^ir_0-2^{i+1}+2\Big).
\end{equation*}

\begin{theorem}\label{teo1}
Let $G$ be a regular graph of order $n_0$ and degree $r_0$, then for $k\geq1$
the $k\text{-}th$ iterated total graph of $G$  is a regular graph of degree $r_k$ and order $n_k$, where
\begin{enumerate}
\item $r_k=2r_{k-1}$, and 
\item $n_k=n_{k-1}\Big(\dfrac{r_{k-1}+2}{2}\Big)$.
\end{enumerate}
\end{theorem}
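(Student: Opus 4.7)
The plan is to prove both statements simultaneously by induction on $k$, so that the base case $k=1$ carries all the combinatorial content and the inductive step reduces to applying the base case to the regular graph $\mathcal{T}^{k-1}(G)$.

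For the base case, I work directly from the definition of $\mathcal{T}(G)$, whose vertex set is $V(G)\cup E(G)$. I partition these into \emph{vertex-type} and \emph{edge-type} vertices of $\mathcal{T}(G)$ and count neighbors of each. For a vertex-type vertex $v\in V(G)$, its neighbors in $\mathcal{T}(G)$ are (i) the vertices of $G$ adjacent to $v$, of which there are $\deg_G(v)=r_0$, and (ii) the edges of $G$ incident with $v$, of which there are again $r_0$; hence its degree in $\mathcal{T}(G)$ is $2r_0$. For an edge-type vertex $e=(uv)\in E(G)$, its neighbors in $\mathcal{T}(G)$ are (i) the edges of $G$ sharing a vertex with $e$, of which there are $(r_0-1)+(r_0-1)=2r_0-2$, and (ii) the two vertices $u,v$ incident with $e$, contributing $2$; hence its degree in $\mathcal{T}(G)$ is also $2r_0$. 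This establishes that $\mathcal{T}(G)$ is regular of degree $r_1=2r_0$.

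The order count is immediate: $n_1=|V(G)|+|E(G)|=n_0+\tfrac{n_0 r_0}{2}=n_0\bigl(\tfrac{r_0+2}{2}\bigr)$, using the handshake lemma for a regular graph. This matches the claimed formulas for $k=1$.

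For the inductive step, assume $\mathcal{T}^{k-1}(G)$ is regular of degree $r_{k-1}$ and order $n_{k-1}$. Since $\mathcal{T}^{k}(G)=\mathcal{T}(\mathcal{T}^{k-1}(G))$, applying the base case to the regular graph $\mathcal{T}^{k-1}(G)$ yields that $\mathcal{T}^{k}(G)$ is regular of degree $2r_{k-1}=r_k$ and of order $n_{k-1}\bigl(\tfrac{r_{k-1}+2}{2}\bigr)=n_k$, completing the induction.

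There is no real obstacle here; the only point requiring any care is the case split in the base case, specifically making sure not to double-count when computing the degree of an edge-type vertex $(uv)$ (the two sums $r_0-1$ at $u$ and $r_0-1$ at $v$ are disjoint because no edge of $G$ is simultaneously adjacent to $(uv)$ at both endpoints unless $G$ has multiple edges, which is excluded by the standing hypothesis that $G$ is simple). Everything else is routine bookkeeping.
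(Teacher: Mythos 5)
Your proof is correct and follows essentially the same route as the paper: the identical case split on vertex-type versus edge-type vertices of $\mathcal{T}(\mathcal{T}^{k-1}(G))$ with the same degree counts $r_{k-1}+r_{k-1}$ and $(r_{k-1}-1)+(r_{k-1}-1)+2$, and the same handshake-lemma computation $n_k=n_{k-1}+\tfrac{n_{k-1}r_{k-1}}{2}$. Your explicit induction scaffolding and the remark about not double-counting are minor elaborations of what the paper does implicitly; there is no substantive difference.
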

\begin{proof}
Let $k\geq1$. Suppose that the $(k-1)\text{-}th$ iterated total graph of $G$ is a regular graph of order $n_{k-1}$ and degree $r_{k-1}$.
\begin{enumerate}
\item
Let $v$ be a vertex of the $k\text{-}th$ iterated total graph of $G$ then
\begin{itemize}
\item {\bf{Case a)}}
If $v$ is a vertex of the $(k-1)\text{-}th$ iterated total graph of $G$ then $v$ is adjacent to $r_{k-1}$ vertices and
incident to $r_{k-1}$ edges in $\mathcal{T}^{k-1}(G)$. Thus, the degree of $v$ in $\mathcal{T}^{k}(G)$
is
\begin{equation*}
r_{k-1}+r_{k-1}=2r_{k-1}.
\end{equation*}

\item {\bf{Case b)}}
If $v$ is an edge of the $(k-1)\text{-}th$ iterated total graph of $G$ then $v$ is adjacent in each extreme to $r_{k-1}-1$ edges and incident to its two extreme vertices in $\mathcal{T}^{k-1}(G)$. Thus, the degree of $v$ in $\mathcal{T}^{k}(G)$ is
\begin{equation*}
(r_{k-1}-1)+(r_{k-1}-1)+2=2r_{k-1}.
\end{equation*}
\end{itemize}

Therefore, the $k\text{-}th$ iterated total graph of $G$ is a regular graph of degree
$$r_k=2r_{k-1}.$$

\item Let $\mathcal{T}^{k-1}(G)$ be a regular graph with $m_{k-1}$ edges then $m_{k-1}=\frac{n_{k-1}r_{k-1}}{2}$.
Therefore, the order of the $k\text{-}th$ iterated total graph of $G$ is
\begin{equation*}
n_k=m_{k-1}+n_{k-1}=\dfrac{n_{k-1}r_{k-1}}{2}+n_{k-1}=n_{k-1}\Big(\frac{r_{k-1}+2}{2}\Big).
\end{equation*}
\end{enumerate}
\end{proof}

Repeated application of the previous theorem generates the following result.

\begin{corollary}
Let $G$ be a regular graph of order $n_0$ and degree $r_0$, then for $k\geq1$
the $k\text{-}th$ iterated total graph of $G$ is a regular graph of degree $r_k$ and order $n_k$, where
\begin{equation}
r_k=2^kr_0,
\end{equation}
and
\begin{equation}
n_k=\dfrac{n_0}{2^k}\prod\limits_{i=0}^{k-1}\Big(2^ir_0+2\Big).
\end{equation}
\end{corollary}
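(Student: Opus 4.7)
The plan is straightforward induction on $k$, using Theorem \ref{teo1} as the single-step recursion. The claim packages the recursive formulas $r_k=2r_{k-1}$ and $n_k=n_{k-1}(r_{k-1}+2)/2$ into closed form, so the proof really just unfolds the recursion $k$ times.

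First I would dispatch the base case $k=1$. Theorem \ref{teo1} gives $r_1=2r_0=2^1 r_0$ and $n_1=n_0(r_0+2)/2=\frac{n_0}{2^1}\prod_{i=0}^{0}(2^i r_0+2)$, matching the stated formulas. For the inductive step, assume the conclusion holds at level $k-1$, i.e.\ $r_{k-1}=2^{k-1}r_0$ and $n_{k-1}=\frac{n_0}{2^{k-1}}\prod_{i=0}^{k-2}(2^i r_0+2)$. Since $\mathcal{T}^{k-1}(G)$ is then regular of order $n_{k-1}$ and degree $r_{k-1}$, Theorem \ref{teo1} applies and yields that $\mathcal{T}^k(G)$ is regular of degree $r_k=2r_{k-1}=2^k r_0$ and order
\begin{equation*}
n_k=n_{k-1}\cdot\frac{r_{k-1}+2}{2}=\frac{n_0}{2^{k-1}}\prod_{i=0}^{k-2}(2^i r_0+2)\cdot\frac{2^{k-1}r_0+2}{2}=\frac{n_0}{2^k}\prod_{i=0}^{k-1}(2^i r_0+2),
\end{equation*}
completing the induction.

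There is no real obstacle here: Theorem \ref{teo1} already does all the combinatorial work by identifying which vertices and edges contribute to the degree and to the new vertex count, and the corollary is merely its iteration. The only thing to be careful about is the bookkeeping of the index of the product, specifically noting that absorbing the factor $(2^{k-1}r_0+2)/2$ into $n_{k-1}$ turns $2^{k-1}$ in the denominator into $2^k$ and extends the product from $i=0,\dots,k-2$ to $i=0,\dots,k-1$. Once that is checked, the statement follows immediately.
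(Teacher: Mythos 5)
Your proof is correct and follows exactly the route the paper intends: the paper simply states that the corollary follows by ``repeated application of the previous theorem,'' and your explicit induction on $k$ using Theorem \ref{teo1}, with the bookkeeping of how the factor $(2^{k-1}r_0+2)/2$ extends the product and doubles the power of $2$ in the denominator, is precisely that argument made precise.
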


For the next result, we need the following Lemma seen in \cite{cvetc}.

\begin{lemma} \label{lem1} \cite{cvetc}
Let $G$ be a regular graph of order $n$ and degree $r$. Then the eigenvalues of $\mathcal{T}(G)$ are

\begin{equation}
\left\{\begin{array}{cl}
\dfrac{2\lambda_i+r-2\pm\sqrt{4\lambda_i+r^2+4}}{2} & \text{\ $i=1,2,{\ldots},n,$ and}\\
\\
-2 & \text{$\dfrac{n(r-2)}{2}$ times,}
\end{array}\right.
\end{equation}
where $\lambda_i$ is an eigenvalue of $G$.
\end{lemma}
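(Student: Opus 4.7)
The plan is to write the adjacency matrix of $\mathcal{T}(G)$ in the block form
\[
A(\mathcal{T}(G)) = \begin{pmatrix} A(G) & I(G) \\ I(G)^T & A(\mathcal{L}(G)) \end{pmatrix},
\]
which is immediate from the definition of the total graph: the vertex--vertex block is $A(G)$, the vertex--edge block is the incidence matrix, and the edge--edge block is the adjacency matrix of the line graph. Since $G$ is $r$-regular, $Q(G)=A(G)+rI_n$, so the identities (\ref{13}) and (\ref{0013}) read $I(G)I(G)^T=A(G)+rI_n$ and $A(\mathcal{L}(G))=I(G)^T I(G) - 2I_m$. These will be the only algebraic inputs.

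Next, I would exhibit the eigenvectors of $A(\mathcal{T}(G))$ in two families. For each eigenvector $x\in\mathbb{R}^n$ of $A(G)$ with eigenvalue $\lambda_i$, I try the ansatz
\[
v_\alpha \;=\; \begin{pmatrix} x \\ \alpha\, I(G)^T x \end{pmatrix}.
\]
A direct computation, substituting $I(G)I(G)^T x = (\lambda_i + r)x$ in the top block and $I(G)^T I(G) I(G)^T x = (\lambda_i + r) I(G)^T x$ in the bottom block, gives
\[
A(\mathcal{T}(G))\,v_\alpha \;=\; \begin{pmatrix} \bigl(\lambda_i + \alpha(\lambda_i+r)\bigr)\,x \\ \bigl(1 + \alpha(\lambda_i + r - 2)\bigr)\,I(G)^T x \end{pmatrix}.
\]
Demanding $A(\mathcal{T}(G))v_\alpha = \mu\,v_\alpha$ yields two scalar equations, and eliminating $\alpha$ reduces to $(\mu-\lambda_i)(\mu-\lambda_i-r+2)=\lambda_i+r$, whose roots are exactly $\tfrac{2\lambda_i+r-2\pm\sqrt{4\lambda_i+r^2+4}}{2}$. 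This accounts for $2n$ eigenvalues, two per eigenvalue of $G$. For the eigenvalue $-2$, I observe that any $y\in\mathbb{R}^m$ with $A(\mathcal{L}(G))y=-2y$ satisfies $I(G)^T I(G) y = 0$ by (\ref{0013}), hence $I(G)y=0$; consequently $\begin{pmatrix} 0 \\ y \end{pmatrix}$ is an eigenvector of $A(\mathcal{T}(G))$ with eigenvalue $-2$. Since $m=nr/2$ and the multiplicity of $-2$ in $A(\mathcal{L}(G))$ equals $m-\mathrm{rank}\,I(G)$, this produces at least $\tfrac{n(r-2)}{2}$ independent eigenvectors of this type.

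A dimension count then closes the argument: $2n + \tfrac{n(r-2)}{2} = \tfrac{n(r+2)}{2} = n+m$ equals the order of $\mathcal{T}(G)$, and the two families are linearly independent because their second blocks lie in $\mathrm{Im}\,I(G)^T$ and $\mathrm{Ker}\,I(G)$, which are orthogonal complements in $\mathbb{R}^m$. The step I expect to require the most care is the bipartite case, where $\lambda_i=-r$ occurs and the ansatz collapses (since $I(G)^T x = 0$) to $\begin{pmatrix} x \\ 0 \end{pmatrix}$, producing only the eigenvalue $-r$. There one must verify that the stated formula evaluated at $\lambda_i=-r$ outputs $-r$ and $-2$ (the discriminant becomes $(r-2)^2$), and that the extra vector of eigenvalue $-2$ is automatically absorbed by the fact that $\mathrm{rank}\,I(G)=n-1$ in the bipartite case, raising the count of $-2$ eigenvectors of the form $(0,y)^T$ by exactly one; the total tally and the formula remain consistent with no double-counting.
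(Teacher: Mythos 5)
Your proof is correct, but there is nothing in the paper to compare it against: the lemma is imported verbatim from Cvetkovi\'c's 1973 paper \cite{cvetc} and no argument is given in the text. The classical derivation works with the same $2\times 2$ block matrix (blocks $A(G)$, $I(G)$, $I(G)^{T}$, $A(\mathcal{L}(G))$) and the same identities (\ref{13}) and (\ref{0013}), but proceeds by computing the characteristic polynomial via a determinant/Schur-complement manipulation, obtaining $(\mu+2)^{m-n}$ times a product of the quadratics and reading off the spectrum. Your eigenvector construction proves the same statement more transparently: the quadratic $(\mu-\lambda_i)(\mu-\lambda_i-r+2)=\lambda_i+r$ (whose roots are indeed $\tfrac{1}{2}\bigl(2\lambda_i+r-2\pm\sqrt{4\lambda_i+r^2+4}\bigr)$, since the discriminant simplifies to $4\lambda_i+r^2+4$) appears as a genuine eigenvalue problem on the invariant plane spanned by $(x,0)^{T}$ and $(0,I(G)^{T}x)^{T}$, and the method forces you to confront the one delicate point, the bipartite case $\lambda_i=-r$, which you resolve correctly: there $I(G)^{T}x=0$, the pair degenerates to the single eigenvalue $-r$, and the missing $-2$ is recovered because $\mathrm{rank}\,I(G)$ drops to $n-1$. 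Two small points to tighten in a write-up: (i) orthogonality of $\mathrm{Im}\,I(G)^{T}$ and $\ker I(G)$ separates the two families from each other but not two first-family vectors attached to the same $\lambda_i$; for that you also need $\alpha_{+}\neq\alpha_{-}$ together with the injectivity of $I(G)^{T}$ on the span of the eigenvectors with $\lambda_i>-r$, which follows from $I(G)I(G)^{T}x=(\lambda_i+r)x$; and (ii) the claim $\mathrm{rank}\,I(G)=n-1$ in the bipartite case presupposes $G$ connected, which is the paper's standing hypothesis but is not written into the lemma.
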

Now we consider bounds for the incidence energy of iterated total graph.
\begin{theorem}
Let $G$ be a regular graph of order $n$ and degree $r$. Then
\begin{itemize}
\item $IE(\mathcal{T}(G))<\dfrac{n(r-2)\sqrt{2r-2}}{2}+2n\sqrt{r}+(n-1)\sqrt{3r-2},$ and
\item $IE(\mathcal{T}(G))\geq\dfrac{n(r-2)\sqrt{2r-2}}{2}+(n+1)\sqrt{r}+\sqrt{3r-2}+(n-1)\sqrt{2r-2}$.
\end{itemize}
Equality hold if and only if $G\cong K_2$.
\end{theorem}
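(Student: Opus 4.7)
The plan is to compute the signless Laplacian spectrum of $\mathcal{T}(G)$ in closed form using Theorem \ref{teo1} and Lemma \ref{lem1}, and then to estimate each summand of the incidence energy by a one-variable analysis of the relevant eigenvalue function.

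First I would apply Theorem \ref{teo1} to conclude that $\mathcal{T}(G)$ is $(2r)$-regular, so $Q(\mathcal{T}(G))=2r\,I+A(\mathcal{T}(G))$. Shifting the adjacency spectrum from Lemma \ref{lem1} by $2r$ produces the signless Laplacian eigenvalues
\[
q_i^{\pm}=\frac{5r+2\lambda_i-2\pm\sqrt{4\lambda_i+r^{2}+4}}{2},\qquad i=1,\dots,n,
\]
together with the value $2r-2$ repeated $\frac{n(r-2)}{2}$ times. Because $\lambda_1=r$, the identity $\sqrt{4r+r^{2}+4}=r+2$ yields $q_1^{+}=4r$ and $q_1^{-}=3r-2$, so the $i=1$ pair contributes exactly $2\sqrt{r}+\sqrt{3r-2}$ to $IE(\mathcal{T}(G))$.

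Next I would study the function $f(\lambda)=\sqrt{q^{+}(\lambda)}+\sqrt{q^{-}(\lambda)}$ on $[-r,r]$, where $q^{\pm}(\lambda)$ are obtained from the displayed formula by replacing $\lambda_i$ with the variable $\lambda$. Squaring gives $f(\lambda)^{2}=(5r+2\lambda-2)+2\sqrt{B(\lambda)}$ with $B(\lambda)=\lambda^{2}+(5r-3)\lambda+r(6r-5)$, whose derivative $2+\frac{2\lambda+5r-3}{\sqrt{B(\lambda)}}$ is strictly positive on $[-r,r]$ for every $r\geq1$, since $2\lambda+5r-3\geq 3r-3\geq 0$ on that interval. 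Hence $f$ is strictly increasing, attaining $f(r)=2\sqrt{r}+\sqrt{3r-2}$ at the right endpoint and, for $r\geq2$, $f(-r)=\sqrt{r}+\sqrt{2r-2}$ at the left endpoint (from the values $q^{+}(-r)=2r-2$ and $q^{-}(-r)=r$).

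With these extremes in hand, both bounds follow by a term-wise summation. For the lower bound I would apply $f(\lambda_i)\geq\sqrt{r}+\sqrt{2r-2}$ to each $i\geq2$, add the exact $i=1$ contribution $2\sqrt{r}+\sqrt{3r-2}$, and include the contribution $\frac{n(r-2)}{2}\sqrt{2r-2}$ from the repeated eigenvalue; the sum is exactly the claimed expression. For the upper bound I would bound $\sqrt{q_i^{+}}\leq 2\sqrt{r}$ and $\sqrt{q_i^{-}}\leq\sqrt{3r-2}$ separately, arranged so that only $n-1$ copies of $\sqrt{3r-2}$ are needed; strictness follows from $G$ being connected with $n\geq2$, which forces $\lambda_1=r$ to be a simple eigenvalue so that at least one of the component inequalities is strict. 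Finally, equality in the lower bound forces $f(\lambda_i)=\sqrt{r}+\sqrt{2r-2}$, i.e. $\lambda_i=-r$, for every $i\geq2$; combined with $\lambda_1=r$ and $\sum_{i}\lambda_i=0$ this gives $(n-2)r=0$, hence $n=2$, and the only connected $1$-regular graph on two vertices is $K_2$. The main technical point I anticipate is the careful bookkeeping for the upper bound, namely justifying the $(n-1)$ coefficient of $\sqrt{3r-2}$ rather than the naive $n$ that one gets from a direct summation; the degenerate case $r=1$ (where $\sqrt{2r-2}=0$ and $\mathcal{T}(G)$ collapses to $K_3$) should be handled as a separate boundary verification.
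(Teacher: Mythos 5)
Your route is the same as the paper's: extract the signless Laplacian spectrum of $\mathcal{T}(G)$ by shifting the adjacency spectrum of Lemma~\ref{lem1} by $2r$ (using the $2r$-regularity from Theorem~\ref{teo1}), isolate the $\lambda_1=r$ pair as $4r$ and $3r-2$, and control the remaining pairs through the monotone function $f(\lambda)=\sqrt{q^+(\lambda)}+\sqrt{q^-(\lambda)}$ on $[-r,r]$. Your monotonicity argument via $f^2=(5r+2\lambda-2)+2\sqrt{B(\lambda)}$ is a slightly cleaner variant of the paper's direct differentiation of $g$, and your lower-bound bookkeeping and equality analysis ($\lambda_i=-r$ for all $i\geq2$ together with $\sum_i\lambda_i=0$ forcing $n=2$) reproduce the paper's conclusion correctly.

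The genuine gap is exactly the point you flag at the end and do not resolve. The termwise estimate $\sqrt{q_i^{+}}+\sqrt{q_i^{-}}\leq 2\sqrt{r}+\sqrt{3r-2}$ applied to the $n-1$ indices $i\geq2$, added to the exact contribution $2\sqrt{r}+\sqrt{3r-2}$ from $i=1$ and to $\frac{n(r-2)}{2}\sqrt{2r-2}$, yields the coefficient $n$ on $\sqrt{3r-2}$, not $n-1$; there is no ``arrangement'' of these same one-term bounds that sheds a copy of $\sqrt{3r-2}$, so your argument only proves the weaker inequality $IE(\mathcal{T}(G))<\frac{n(r-2)\sqrt{2r-2}}{2}+2n\sqrt{r}+n\sqrt{3r-2}$. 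You should be aware that the paper's own proof has the identical defect: its displayed chain ends with the ``equality'' $\cdots+2\sqrt{r}+\sqrt{3r-2}+\sum g(r)=\cdots+2n\sqrt{r}+(n-1)\sqrt{3r-2}$, which is arithmetically false (off by one $\sqrt{3r-2}$), so the stated upper bound is not actually established there either; closing the gap would require a genuinely sharper estimate on $\sum_{i\geq2}\sqrt{q_i^{-}}$ (for instance exploiting $\sum_{i\geq2}\lambda_i=-r$), not better bookkeeping. Separately, you defer the $r=1$ boundary case rather than carrying it out; the paper does that computation explicitly ($G\cong\tfrac{n}{2}K_2$, $\mathcal{T}(G)\cong\tfrac{n}{2}K_3$, $IE=2n$, lower bound $n+2\leq2n$ with equality iff $n=2$), and since that is where the equality case $G\cong K_2$ actually lives, it cannot be omitted.
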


\begin{proof}
1. Let $r=1$ then $G$ is union disjoint of copies of $K_2$ and $\mathcal{T}(G)$ is union disjoint of copies of $K_3$. This is, $G\cong \frac{n}{2}K_2$ and
$\mathcal{T}(G)\cong \frac{n}{2}K_3$, where $n$ is even. Since $IE(K_3)=4$ it follow that $IE(\mathcal{T}(G))=2n$. Therefore, if $r=1$

$$\dfrac{n(r-2)\sqrt{2r-2}}{2}+2n\sqrt{r}+(n-1)\sqrt{3r-2}=3n-1>2n$$

$$\dfrac{n(r-2)\sqrt{2r-2}}{2}+(n+1)\sqrt{r}+\sqrt{3r-2}+(n-1)\sqrt{2r-2}=n+2\leq 2n,$$
with equality if and only if $n=2,$ this is, $G\cong K_2$.

Let $r\geq2$. Since $G$ is a regular graph of degree $r$. From Theorem \ref{teo1}, $\mathcal{T}(G)$ is a regular graph
of degree $2r$. From Lemma \ref{lem1}, the signless Laplacian eigenvalues of $\mathcal{T}(G)$ are

\begin{equation*}
\left\{\begin{array}{cl}
\dfrac{5r+2\lambda_i-2\pm\sqrt{4\lambda_i+r^2+4}}{2} & \text{\ $i=1,2,{\ldots,}n$ and}\\
\\
2r-2 & \text{$\dfrac{n(r-2)}{2}$ times,}
\end{array}\right.
\end{equation*}
where $\lambda_i$ is an eigenvalue of $G$.

From Perron-Frobenius's Theory, $\lambda_1=r$ and $-r\leq\lambda_i<r$ for $i=2,{\ldots,}n$.
By definition of incidence energy of a graph, we have

\begin{eqnarray*}
IE(\mathcal{T}(G))&=&\dfrac{n(r-2)\sqrt{2r-2}}{2}+2\sqrt{r}+\sqrt{3r-2}\\
&+&\sum_{i=2}^{n}\Bigg(\sqrt{\frac{5r+2\lambda_i-2+\sqrt{4\lambda_i+r^2+4}}{2}}+\sqrt{\frac{5r+2\lambda_i-2-\sqrt{4\lambda_i+r^2+4}}{2}}\Bigg).
\end{eqnarray*}

Let
$$g(t)=\sqrt{\frac{5r+2t-2+\sqrt{r^2+4t+4}}{2}}+\sqrt{\frac{5r+2t-2-\sqrt{r^2+4t+4}}{2}}$$
and
$$h(t)=\sqrt{r^2+4t+4}$$
where $-r\leq t<r$. Thus,

$$g'(t)=\dfrac{1}{\sqrt{2}h(t)}\bigg(\frac{h(t)+1}{\sqrt{5r+2t-2+h(t)}}+\frac{h(t)-1}{\sqrt{5r+2t-2-h(t)}}\bigg).$$

It is clear that $g(t)$ is an increasing function for $-r\leq t<r$.

\begin{eqnarray*}
IE(\mathcal{T}(G))&<&\dfrac{n(r-2)\sqrt{2r-2}}{2}+2\sqrt{r}+\sqrt{3r-2}+\sum_{i=1}^{n}g(r)\\
&=&\dfrac{n(r-2)\sqrt{2r-2}}{2}+2n\sqrt{r}+(n-1)\sqrt{3r-2}.
\end{eqnarray*}

\begin{eqnarray*}
IE(\mathcal{T}(G))&\geq&\dfrac{n(r-2)\sqrt{2r-2}}{2}+2\sqrt{r}+\sqrt{3r-2}+\sum_{i=1}^{n}g(-r)\\
&=&\dfrac{n(r-2)\sqrt{2r-2}}{2}+(n+1)\sqrt{r}+\sqrt{3r-2}+(n-1)\sqrt{2r-2}.
\end{eqnarray*}

Since $n\geq3$ then there exists $\lambda_{i} > -r$ for some $i=2,3,{\ldots,}n$. Therefore, the equality is impossible. 
\end{proof}

\begin{corollary}
Let $G$ be a regular graph of order $n_0$ and degree $r_0\geq2$, and let for $k\geq0$,
the $k\text{-}th$ iterated total graph of $G$ be of degree $r_k$ and  order $n_k$. Then
\begin{itemize}
\item $IE(\mathcal{T}^{k+1}(G))<\dfrac{n_k(r_k-2)\sqrt{2r_k-2}}{2}+2n_k\sqrt{r_k}+(n_k-1)\sqrt{3r_k-2}$, and
\item $IE(\mathcal{T}^{k+1}(G))\geq\dfrac{n_k(r_k-2)\sqrt{2r_k-2}}{2}+(n_k+1)\sqrt{r_k}+\sqrt{3r_k-2}+(n_k-1)\sqrt{2r_k-2}$.
\end{itemize}
Equality hold if and only if $G\cong K_2$ and $k=0$.
\end{corollary}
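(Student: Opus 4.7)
The approach is a direct substitution: apply the preceding theorem to the regular graph $H:=\mathcal{T}^{k}(G)$ in place of $G$, and then identify $\mathcal{T}(H)$ with $\mathcal{T}^{k+1}(G)$. The spectral heavy lifting has already been done, since the preceding theorem produces bounds on $IE(\mathcal{T}(\cdot))$ of any regular graph, and we know from the corollary following Theorem \ref{teo1} that $\mathcal{T}^{k}(G)$ is regular for every $k\geq 0$. So the present statement should drop out by a change of parameters rather than by any new spectral computation.

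First I would record the regularity and numerical data of $\mathcal{T}^{k}(G)$: it is a regular graph of order $n_{k}$ and degree $r_{k}=2^{k}r_{0}$, and the hypothesis $r_{0}\geq 2$ forces $r_{k}\geq 2$ for every $k\geq 0$, which is precisely the regime in which the preceding theorem's proof is valid (in particular, where the spectral shift formula supplied by Lemma \ref{lem1} applies). Next, I would apply the preceding theorem to $H=\mathcal{T}^{k}(G)$ with parameters $(n,r)=(n_{k},r_{k})$ and note that $\mathcal{T}(H)=\mathcal{T}^{k+1}(G)$; substituting $n_{k}$ for $n$ and $r_{k}$ for $r$ in the two displayed bounds produces exactly the claimed upper and lower bounds on $IE(\mathcal{T}^{k+1}(G))$. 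No Perron--Frobenius argument beyond the one already carried out in the preceding theorem is needed.

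The one point that deserves care is the equality statement. The preceding theorem's equality case requires the underlying regular graph to be isomorphic to $K_{2}$, so equality in the present bounds would force $\mathcal{T}^{k}(G)\cong K_{2}$. For $k\geq 1$ this is impossible, because $\mathcal{T}^{k}(G)$ has degree $r_{k}\geq 2$ while $K_{2}$ has degree $1$; and for $k=0$ the isomorphism $G\cong K_{2}$ would give $r_{0}=1$, contradicting the standing hypothesis $r_{0}\geq 2$. Consequently the listed equality case $G\cong K_{2}$, $k=0$ is a purely formal boundary case inherited from the preceding theorem and never actually realized under the stated hypothesis. Keeping track of this equality condition is the only subtle part of the argument; the rest is a clean reduction to a theorem already proved in the section.
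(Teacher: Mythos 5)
Your proposal is correct and is exactly the paper's (implicit) argument: the corollary is stated without a separate proof, being an immediate application of the preceding theorem to the regular graph $\mathcal{T}^{k}(G)$, whose order $n_k$ and degree $r_k=2^k r_0$ are supplied by Theorem \ref{teo1} and its corollary. Your observation that the equality case $G\cong K_2$, $k=0$ is vacuous under the standing hypothesis $r_0\geq 2$ (since $K_2$ has degree $1$) is a fair criticism of the statement as printed rather than a gap in your argument.
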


\begin{corollary}
 Under the notation specified in Corollary $3.1$, for any integer $k$
\begin{itemize}
\item $IE(\mathcal{T}^{k}(G))<(n_k-2n_{k-1})\sqrt{r_k-2}+n_{k-1}\sqrt{2r_k}+(n_{k-1}-1)\sqrt{r_k+r_{k-1}-2}$, and
\item $IE(\mathcal{T}^{k}(G))\geq(n_k-2n_{k-1})\sqrt{r_k-2}+(n_{k-1}+1)\sqrt{r_{k-1}}+\sqrt{r_k+r_{k-1}-2}+(n_{k-1}-1)\sqrt{r_k-2}$.
\end{itemize}
Equality hold if and only if $G\cong K_2$ and $k=1$.
\end{corollary}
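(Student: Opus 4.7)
The plan is to obtain this corollary as a direct algebraic reformulation of the preceding corollary (the bounds for $IE(\mathcal{T}^{k+1}(G))$) under the index shift $k\mapsto k-1$, combined with the recurrences from Theorem~\ref{teo1}. So first I would invoke that corollary with its index $k$ replaced by $k-1$. Since the Corollary~3.1 notational setup supplies $r_0\geq 2$, Theorem~\ref{teo1} guarantees $r_{k-1}\geq 2$ for any $k\geq 1$, and therefore
\begin{equation*}
IE(\mathcal{T}^{k}(G))<\frac{n_{k-1}(r_{k-1}-2)\sqrt{2r_{k-1}-2}}{2}+2n_{k-1}\sqrt{r_{k-1}}+(n_{k-1}-1)\sqrt{3r_{k-1}-2},
\end{equation*}
together with the analogous lower bound, and with equality (in the lower bound) exactly when $G\cong K_2$ and $k-1=0$, i.e.\ $k=1$.

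Second, I would translate every quantity into $n_k, r_k$ using the relations $r_k=2r_{k-1}$ and $n_k=n_{k-1}(r_{k-1}+2)/2$ of Theorem~\ref{teo1}. These immediately give
\begin{equation*}
\sqrt{2r_{k-1}-2}=\sqrt{r_k-2},\qquad 2\sqrt{r_{k-1}}=\sqrt{2r_k},\qquad 3r_{k-1}-2=r_k+r_{k-1}-2,
\end{equation*}
together with
\begin{equation*}
\frac{n_{k-1}(r_{k-1}-2)}{2}=n_{k-1}\cdot\frac{r_{k-1}+2}{2}-2n_{k-1}=n_k-2n_{k-1}.
\end{equation*}
Plugging these in term by term converts the leading summand into $(n_k-2n_{k-1})\sqrt{r_k-2}$, the middle one into $n_{k-1}\sqrt{2r_k}$, and the last into $(n_{k-1}-1)\sqrt{r_k+r_{k-1}-2}$, producing the stated upper bound. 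The same dictionary handles the lower bound, where the extra summand $(n_{k-1}-1)\sqrt{2r_{k-1}-2}$ rewrites as $(n_{k-1}-1)\sqrt{r_k-2}$; the equality characterization then transfers verbatim.

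I expect no genuine obstacle --- the proof is a bookkeeping exercise once the index shift is spotted. The only points requiring a little attention are the implicit restriction $k\geq 1$ (so that $n_{k-1}$ and $r_{k-1}$ are defined), threading the hypothesis $r_0\geq 2$ inherited from the Corollary~3.1 notation, and a sanity check of the equality case: for $G\cong K_2$ and $k=1$ one has $\mathcal{T}(K_2)=K_3$ with signless Laplacian spectrum $\{4,1,1\}$, so $IE(\mathcal{T}(K_2))=4$, and substituting $n_0=2,\,r_0=1,\,n_1=3,\,r_1=2$ shows that this matches the common value of the lower bound, confirming consistency with the shifted equality condition.
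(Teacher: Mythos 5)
Your proposal is correct and is exactly the argument the paper intends: the corollary is the preceding corollary with $k$ replaced by $k-1$, rewritten via $r_k=2r_{k-1}$ and $n_k=n_{k-1}(r_{k-1}+2)/2$ (the paper supplies no explicit proof, and your term-by-term substitutions and the equality check at $G\cong K_2$, $k=1$ all verify).
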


\subsection{An application: Constructing nonisomorphic cospectral graphs}

Many constructions of cospectral graphs are known. Most constructions from before
1988 can be found in \cite{DM2}, \S 6.1, and \cite{DM1}, \S 1.3; see also \cite{DM3}, \S 4.6.
More recent constructions of cospectral graphs are presented by Seress \cite{DM4}, who gives an
infinite family of cospectral 8-regular graphs. Graphs cospectral to distance-regular
graphs can be found in \cite{DM5,DM2,DM1,DM3,DM7,DM4,DM6}. Notice that the graphs
mentioned are regular, so they are cospectral with respect to any generalized adjacency
matrix, which in this case includes the Laplace matrix.

Let's consider the functions
\begin{equation*}
f_1(x)=\dfrac{1}{2}(2x+r-2+\sqrt{4x+r^2+4})
\end{equation*}
and
\begin{equation*}
f_2(x)=\dfrac{1}{2}(2x+r-2-\sqrt{4x+r^2+4}).
\end{equation*}
\begin{theorem}\label{teo9}
  Let $G_1$ and $G_2$ be two regular graphs of the same order and degree $n_0$ and $r_0\geq3$, respectively. Then, for any
  $k\geq1$ the following hold
  \begin{itemize}
  \item[(a)] $\mathcal{T}^k(G_1)$ and $\mathcal{T}^k(G_2)$ are of the same order, and have the same number of edges.
  \item[(b)] $\mathcal{T}^k(G_1)$ and $\mathcal{T}^k(G_2)$ are cospectral if and only if $G_1$ and $G_2$ are cospectral.
  \end{itemize}
\end{theorem}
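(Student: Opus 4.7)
Part (a) is immediate from Theorem \ref{teo1} applied iteratively: $\mathcal{T}^k(G_i)$ is regular of order $n_k$ and degree $r_k$, and both $n_k$ and $r_k$ depend only on $n_0$ and $r_0$. Hence $\mathcal{T}^k(G_1)$ and $\mathcal{T}^k(G_2)$ have the same order, the same degree, and therefore the same number of edges $n_k r_k / 2$.

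For part (b) I would reduce to the case $k=1$ and then descend by induction. Indeed, by (a) the graphs $H_i := \mathcal{T}^{k-1}(G_i)$ are regular with the same order $n = n_{k-1}$ and the same degree $r = r_{k-1}$, so it suffices to prove the following key lemma: \emph{for regular graphs $H_1, H_2$ of common order $n$ and common degree $r$, $\mathcal{T}(H_1)$ and $\mathcal{T}(H_2)$ are cospectral if and only if $H_1$ and $H_2$ are cospectral.} Iterating this equivalence from $k$ down to $1$ then gives part (b). The ``if'' direction of the lemma is immediate from Lemma \ref{lem1}, since the spectrum of $\mathcal{T}(H)$ is determined by the spectrum of $H$ together with $n$ and $r$: it consists of the pairs $\{f_1(\lambda_j), f_2(\lambda_j)\}$ plus $\tfrac{n(r-2)}{2}$ copies of $-2$.

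The nontrivial ``only if'' direction I plan to handle by power-sum (trace) inversion. Set $F_k(\lambda) := f_1(\lambda)^k + f_2(\lambda)^k$. From $f_1+f_2 = 2\lambda+r-2$ and $f_1 f_2 = \lambda^2 + (r-3)\lambda - r$ one obtains the Newton-style recursion
\begin{equation*}
F_k \;=\; (2\lambda + r - 2)\,F_{k-1} - \bigl(\lambda^2 + (r-3)\lambda - r\bigr)\,F_{k-2},\qquad F_0 = 2,\ F_1 = 2\lambda + r - 2,
\end{equation*}
from which induction shows that $F_k(\lambda)$ is a polynomial in $\lambda$ of degree exactly $k$, with leading coefficient $L_k$ satisfying $L_k = 2L_{k-1} - L_{k-2}$ and $L_0 = L_1 = 2$, hence $L_k = 2$ for every $k \geq 0$. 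Lemma \ref{lem1} then yields, for each $k \geq 0$,
\begin{equation*}
\mathrm{tr}\,A(\mathcal{T}(H))^{k} \;=\; \sum_{j=1}^{n} F_k(\lambda_j(H)) + \frac{n(r-2)}{2}\,(-2)^k.
\end{equation*}
If $\mathcal{T}(H_1)$ and $\mathcal{T}(H_2)$ are cospectral, the left-hand sides agree for every $k$, hence so do $\sum_j F_k(\lambda_j(H_1))$ and $\sum_j F_k(\lambda_j(H_2))$. Writing $F_k(\lambda) = \sum_{i=0}^{k}a_{k,i}\lambda^{i}$ with $a_{k,k} = 2 \neq 0$, and using $p_0(H_1) = p_0(H_2) = n$, an induction on $k$ recovers $p_k(H_1) = p_k(H_2)$ for every $k$; Newton's identities then force $H_1$ and $H_2$ to share the same characteristic polynomial.

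The main obstacle is the degree/leading-coefficient bookkeeping that yields $L_k = 2$ for all $k$, because this is precisely what makes the triangular linear system in the power sums invertible. A minor technicality is that $-2$ may also enter $\mathrm{spec}(\mathcal{T}(H))$ through $f_2(-1)=-2$ or $f_1(-r)=-2$; but such contributions are already absorbed into the sum $\sum_j F_k(\lambda_j)$, so the trace identity is an exact equality and the argument goes through without any special casework.
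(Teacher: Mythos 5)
Your proof is correct, but for the hard direction of (b) it takes a genuinely different route from the paper. Both arguments share the same skeleton: part (a) from Theorem \ref{teo1} iterated, the reduction of (b) to a single application of $\mathcal{T}$ between regular graphs of common order and degree, and the ``if'' direction read off immediately from Lemma \ref{lem1}. For the ``only if'' direction, however, the paper simply invokes ``the injectivity of the functions $f_1$ and $f_2$ on the segment $[-r,r]$, $r>2$, applied a sufficient number of times'' --- i.e.\ an unspelled-out peeling argument that recovers $\mathrm{spec}(H)$ from $\mathrm{spec}(\mathcal{T}(H))$ by inverting $f_1$ and $f_2$. That sketch leaves a real issue implicit: the ranges $f_1([-r,r])=[-2,2r]$ and $f_2([-r,r])=[-r,r-2]$ overlap, and $-2$ can occur both as the exceptional eigenvalue of multiplicity $\tfrac{n(r-2)}{2}$ and as $f_1(-r)$ or $f_2(-1)$, so one must still explain how to decide which branch (or which copy of $-2$) a given eigenvalue of $\mathcal{T}(H)$ comes from. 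Your power-sum inversion sidesteps all of that: the trace identity $\mathrm{tr}\,A(\mathcal{T}(H))^{k}=\sum_{j}F_k(\lambda_j)+\tfrac{n(r-2)}{2}(-2)^k$ requires no separation of branches, and the computation $L_k=2$ (via $L_k=2L_{k-1}-L_{k-2}$, which checks out, as does $f_1f_2=\lambda^2+(r-3)\lambda-r$) makes the resulting linear system in the power sums triangular and invertible. The price is the Newton-identity bookkeeping; what it buys is a fully rigorous argument where the paper offers only a one-line sketch.
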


\begin{proof}
Statement (a) follows from Eqs. (2) and (3), and the fact that the number of vertices and edges of $\mathcal{T}^k(G)$ {corresponds to}
the number of vertices of $\mathcal{T}^{k+1}(G)$. Statement (b) follows from relation (4) and the injectivity of the
functions $f_1$ and $f_2$ on the segment $[-r,r],$ \ $r>2$, applied a sufficient number of times. 
\end{proof}

%
%


\vspace{1cm}
{\bf Acknowledgments:}
Eber Lenes was supported by the Research Department of the Universidad del Sin\'u, Colombia, Exequiel Mallea-Zepeda was supported
by Proyecto UTA-Mayor 4740-18, Universidad de Tarapac\'a, Chile and Maria Robbiano was partially supported by project VRIDT UCN 170403003.
\vspace{1cm}

\newpage


\begin{thebibliography}{99}



\bibitem{DM5} A. E. Brouwer, A. M. Cohen and A. Neumaier, Distance-Regular Graphs, Springer-Verlag,
Berlin, 1989.

\bibitem{Buckley} F. Buckley, Iterated line graphs, Con{}gr. Numer. 33 (1981) 390-394.

\bibitem{DM32} F. Buckley, Mean distance in line graph, Congr. Numer, 32 (1981) 153-162.

\bibitem{Harary} F. Buckley, The size of iterated line graphs, Graph Theory Notes N.Y., 25 (1993) 33-36.
{\bibitem{cardoso}D. Cvetkovi\'{c}, P. Rowlinson, S. K. Simi\'{c}. Eigenvalue Bounds for the Signless Laplacian. Publications de L\'\ Institut Math\'{e}matique. Nouvelle s\'{e}rie, tome 81(95) (2007), 11-27.}

\bibitem{DM33} N. Cohen, D. Dimitrov, R. Krakovski, R. Skrekovski and V. Vukasinovi\'c, On Wiener index of graphs and their line graph, MATCH
Comm. Math. Comput. Chem. 64 (2010) 683-698.

\bibitem{DM2} D. M. Cvetkovi\'c, M. Doob and H. Sachs, Spectra of Graphs: Theory and Applications. V. E. B.
Deutscher Verlag der Wissenschaften, Berlin, 1979.

\bibitem{DM1} D. M. Cvetkovi\'c, M. Doob, I. Gutman and A. Torga\'sev, Recent Results in the Theory of Graph
Spectra, Annals of Discr. Math, 36 (1988) North Holland, Amsterdam.

\bibitem{cvetc} D. M. Cvetkovi\'c, Spectrum of the total graph of a graph, Publ. Inst. Math.(Beograd), 16 (1973) 49-52.

\bibitem{DM3} C. D. Godsil, Algebraic Combinatorics, Chapman and Hall, New York, 1993.

\bibitem{DM37} I. Gutman, E. Estrada, Topological indices based on the line graph of the molecular graph, J. Chem. Inf. Comput. Sci, 36 (1996) 541-543.

\bibitem{p6} I. Gutman, D. Kiani, M. Mirzakhah, On incidence energy of graphs, MATCH Commun.Math.Comput.Chem, 62 (2009) 573-580.

\bibitem{DM35} I. Gutman, L. Pavlovi\'c, More on distance of line graph, Graph Theory Notes New York, 33 (1997) 14-18.

\bibitem{DM38} I. Gutman, Z. Tomovi\'c, On the application of line graphs in quantitative structure-property studies, J. Serb. Chem. Soc, 65 (2000) 577-580.

\bibitem{DM34} I. Gutman, Distance of line graph, Graph Theory Notes New York, 31 (1996) 49-52.

\bibitem{DM36} I. Gutman, Edge versions of topological indices, in: I. Gutman and Furtula (Eds.),
Novel Molecular Structure Descriptor-Theory and Applications II, Univ. Kragujevac, Kragujevac, (2010) 3-20.


\bibitem{DM7} W. H. Haemers and E. Spence, Graphs cospectral with distance-regular graphs, Lin. Multilin. Alg., 39 (1995) 91-107.

\bibitem{DM39} A. Iranmanesh, I. Gutman, O. Khormali an Mahmiani, The edge versions of the Weiner index, MATCH Comm. Math. Comput. Chem, 61 663-672.

\bibitem{p7} M. R. Jooyandeh, D. Kiani, M. Mirzakhah, Incidence energy of a graph, MATCH Co\-mmum.Math.Comput.Chem., 62 (2009) 561-572.


\bibitem{DM30} H. S. Ramane, A. B. Ganagi, and I. Gutman, On a conjeture of the diameter of line graph of
graph of diameter two, Kragujevac J. Math, 36 (2012) 59-62.

\bibitem{DM31} H. S. Ramane, I. Gutman, A. B. Ganagi, On diameter of line graphs, Iranian Journal of Mathematical Sciences and Informatics, 8 (2013) 105-109.

\bibitem{DM4} \'A. Seress, Large families of cospectral graphs, Designs, Codes and Cryptograph, 21 (2000) 205-208.

\bibitem{DM6} E. R. Van Dam and W. H. Haemers, Spectral characterizations of some distance-regular graphs, J. Alg. Combin. 15 (2002) 189-202.

\end{thebibliography}
\end{document}